\numberwithin{equation}{section}
\numberwithin{figure}{section}
\numberwithin{table}{section}
\newcommand{\sanshu}{ |\!|\!| }
\newtheorem{exmp}{Example}[section]
\newtheorem{remark}{Remark}[section]
\title{Asymptotic-preserving and positivity-preserving discontinuous Galerkin method for the semiconductor Boltzmann equation in the diffusive scaling
\thanks{Submitted to the editors February, 2025. 
\funding{L. Liu acknowledges the support by National Key R\&D Program of China (2021YFA1001200), Ministry of Science and Technology in China, Early Career Scheme (24301021) and General Research Fund (14303022 \& 14301423) funded by Research Grants Council of Hong Kong. X. Zhong acknowledges the support by the NSFC Grant (12272347). 
}}}
\author{
Huan Ding \thanks{School of Mathematical Sciences, Zhejiang University, Zhejiang, China(\email{dinghuan@zju.edu.cn}).} 
\and  
Liu Liu \thanks{Department of Mathematics, The Chinese University of Hong Kong, Hong Kong, China
  (\email{liuliu@cuhk.edu.hk}). }
  \and
 Xinghui Zhong \thanks{School of Mathematical Sciences, Zhejiang University, Zhejiang, China (\email{zhongxh@zju.edu.cn}). }
}
\begin{document}

 \maketitle

\begin{abstract}
In this paper, we develop an asymptotic-preserving and positivity-preserving discontinuous Galerkin (DG) method for solving the semiconductor Boltzmann equation in the diffusive scaling. We first formulate the diffusive relaxation system based on the even-odd decomposition method, which allows us to split into one relaxation step and one transport step. We adopt a robust implicit scheme that can be explicitly implemented for the relaxation step that involves the stiffness of the collision term, while the third-order strong-stability-preserving Runge-Kutta method is employed for the transport step. We couple this temporal scheme with the DG method for spatial discretization, which provides additional advantages including high-order accuracy, $h$-$p$ adaptivity, and the ability to handle arbitrary unstructured meshes. A positivity-preserving limiter is further applied to preserve physical properties of numerical solutions. The stability analysis using the even-odd decomposition is conducted for the first time. We demonstrate the accuracy and performance of our proposed scheme through several numerical examples.

\end{abstract}

\begin{keywords}
semiconductor Boltzmann, discontinuous Galerkin method, diffusive scaling, asymptotic-preserving,  positivity-preserving limiter, stability analysis 
\end{keywords}

\begin{MSCcodes}
65M60, 65M70, 35Q20
\end{MSCcodes}

\section{Introduction}
Kinetic equations have been widely used in various areas, including rarefied gas, plasma physics, astrophysics, semiconductor device modeling, social and biological sciences \cite{Semi-Book}. These equations describe the non-equilibrium dynamics of a system composed of a large number of particles and bridge atomistic and continuum models in the hierarchy of multiscale modeling. The Boltzmann-type equation, as one of the most representative models in kinetic theory, provides a power tool to describe molecular gas dynamics, radiative transfer, plasma physics, and polymer flow \cite{A15}. They have significant impacts in designing, optimization, control, and inverse problems. For example, it can be used in the design of semiconductor devices, topology optimization of gas flow channel, or risk management in quantitative finance \cite{CS21}. The linearized Boltzmann equation, particularly the semiconductor Boltzmann equation, which is the focus of this study, plays a pivotal role in semiconductor device modeling \cite{Ansgar}.

From the computational point of view, the major challenge in simulating kinetic equations arises from the presence of multiple temporal and spatial scales. This variation in scales is typically characterized by the Knudsen number $\varepsilon$, a dimensionless quantity representing the mean free path. The Knudsen number can vary by many orders of magnitude within the computational domain, and drives the system towards drift-diffusion equations as it goes to zero. In the past two decades, asymptotic-preserving (AP) schemes have been a popular and robust computational approach for kinetic and hyperbolic problems. These schemes preserve the asymptotic transition from one scale to another at the discrete level, ensuring accuracy across regimes. For a comprehensive review, see \cite{AP-Review}. The main advantage of AP schemes lies in their efficiency across all regimes, especially in the hydrodynamic or diffusive regimes. Unlike traditional methods, AP schemes do not meed the numerical resolution of small physical parameters yet can still accurately capture the macroscopic behavior. AP schemes were first developed for   steady neutron transport problems in \cite{Jin1991,Jin99}, later applied to non-stationary transport problems in \cite{JPT1998, JPT2000, Klar},  and have since been successfully applied to a wide range of other problems.


In this paper, we aim to construct an efficient AP scheme within the even-odd decomposition framework using the discontinuous Galerkin (DG) spatial discretization. DG methods are a class of finite element methods using discontinuous basis functions. They were first proposed in \cite{reed_triangular_1973} to solve linear transport equations, and later were extended to hyperbolic conservation laws in \cite{cockburn_runge-kutta_1991, cockburn_tvb_1989-1, cockburn_tvb_1989,cockburn_runge-kutta_1990,cockburn_rungekutta_1998}, where a framework using nonlinearly stable high order Runge-Kutta (RK) time discretizations \cite{shu_efficient_1988} and DG discretization in space to easily solve nonlinear time dependent problems, resulting the so-called RKDG method. DG methods offer several advantages including  high-order accuracy, $h$-$p$ adaptivity, and the ability to handle arbitrary unstructured mesh.
DG methods have gained  popularity in a wide range of mathematical models. For kinetic problems, we refer to \cite{cheng_energy-conserving_2014,cheng_energy-conserving_2014-1,cheng_energy-conserving_2015, Xiong1, Xiong2, Peng,cheng_numerical_2015, DG_IMEX, LiF_IMEXDG,filbet_conservative_2022,yin_highly_2023, Ye_HPDG, DG_VA_DFP} and the references therein for an incomplete list.

The goal of this paper is to design high-order asymptotic-preserving DG (APDG) methods for the semiconductor Boltzmann equation. Inspired by the work in \cite{jin_discretization_2000}, we first derive the diffusive relaxation system based on the even-odd decomposition method, which can be split into one relaxation step involving the stiffness of the collision term and one transport step. Then we adopt a robust explicitly implementable implicit scheme \cite{jin_discretization_2000} for the relaxation step and the third-order strong-stability-preserving  RK (SSPRK) method \cite{shu_total-variation-diminishing_1988} for the transport step. The main novelty is that we couple this efficient temporal discretization method with high-order DG method for spatial discretization, together with  the Hermite quadrature rule for the velocity discretization, rendering a high-order APDG scheme for the model problem. We further apply a positivity-preserving limiter to preserve physical properties of the solution. Moreover, we prove that the proposed fully discrete scheme is stable under a suitable CFL condition.  
It is worth emphasizing that within the even-odd decomposition framework, to the authors' best knowledge, this is the {\it first time} the stability analysis is conducted.
Numerical examples are provided to demonstrate the performance of the proposed method.


We summarize the features of the proposed APDG scheme in the following:
\begin{itemize}
\item The scheme is high-order accurate on the fully discrete level, and  uniformly accurate for different $\varepsilon$; 
\item The scheme is positivity-preserving, i.e., $f^n \geq 0$ holds for any time level $n$; 
\item The scheme is AP;
\item The scheme is stable under a suitable CFL condition. 
\end{itemize}

The rest of the paper is organized as follows: In Section
\ref{sec:1}, we introduce the linear semiconductor Boltzmann equations and  the AP temporal discretization method in the even-odd decomposition framework. In Section \ref{sec:full}, we present the fully discrete scheme with the DG method for the spatial discretization, combining with a positivity-preserving limiter.  Section \ref{sec:3} is dedicated to the stability and AP property analysis of the proposed APDG scheme. Several numerical experiments are provided in Section \ref{sec:4} to demonstrate efficiency and accuracy of this new approach. Concluding remarks are given
in Section \ref{sec:conclusion}.

\section{Model equations and temporal discretization}
\label{sec:1}
In this section, we introduce the model equations and  the AP temporal discretization scheme.

The Boltzmann equation is one of the most fundamental kinetic equations which describes the time evolution of the particle probability density distribution.
Under proper scaling, the dimensionless Boltzmann equation reads as
\begin{equation}\label{eq:Boltzmann}
    \varepsilon\partial_t f+v\cdot\nabla_xf-E\cdot\nabla_vf=\frac1\varepsilon\mathcal{Q}(f),
\end{equation}
where $f(x,v,t)$ is the probability density function of particles at position $x\in\mathbb{R}^d$ with velocity $v\in\mathbb{R}^d$ at time $t>0$, with $d$ representing the dimension of the computational field. The 
constant $\varepsilon$ is the Knudsen number, and $E$ is the electric field. 
The Boltzmann collision operator $\mathcal{Q}(f)$ is  given by
\begin{equation}\label{eq:Boltz_operator}
    \mathcal{Q}(f) =\int_\mathbb{R^d} \sigma(v,w) (M(v)f(w)-M(w)f(v))\, dw,
\end{equation}
where $\sigma$ is the anisotropic scattering kernel satisfying $\sigma(v,w)=\sigma(w,v)>0$, and $M$ is the normalized Maxwellian distribution defined as
\begin{equation}\label{eq:Maxwellian}
    M(v) = \frac{1}{\sqrt{2\pi}}e^{-\frac{v^2}{2}}.
\end{equation}
We further define the collision frequency  $\lambda$ as
\begin{equation}
    \lambda(x,v) = \int_\mathbb{R^d} \sigma(v,w)M(w)\,dw,
\end{equation}
and assume that it satisfies $\lambda \leq \mu$ for some positive constant $\mu$. More details about this model can be found in \cite{Semi-Book}.

In the following subsections, we briefly review the process to formulate the diffusive relaxation system based on the even-odd decomposition method \cite{JPT1998, JPT2000, jin_discretization_2000} and introduce the AP temporal discretization scheme, as a preparation for deriving the AP fully discrete scheme in Section \ref{sec:full}. For simplicity of discussion, we consider one-dimensional velocity variable $v\in\mathbb{R}$ as an illustrative example to show the main idea. We refer readers to 
 \cite{JPT1998, JPT2000, jin_discretization_2000} for more  details.

\subsection{Even-odd decomposition and diffusive relaxation system}
\label{sec:evenodd}
We first split \eqref{eq:Boltzmann} into two equations for $v$ and $-v$, respectively,
\begin{subequations}\label{eq:pmv}
    \begin{align}
        \varepsilon\partial_t f(v)+v\cdot\nabla_xf(v)-E\cdot\nabla_vf(v)&=\frac1\varepsilon\mathcal{Q}(f)(v),\\
        \varepsilon\partial_t f(-v)-v\cdot\nabla_xf(-v)+E\cdot\nabla_vf(-v)&=\frac1\varepsilon\mathcal{Q}(f)(-v).
    \end{align}
\end{subequations}
Without abuse of notations, here and after, we adopt simpler notations such as $f(v)$ to represent $f(x,v,t)$. 


Then we define the even parity $r$ and odd parity $j$ as
\begin{subequations}\label{eq:def_even-odd}
    \begin{align}
        r(t,x,v)&=\frac12\left(f(t,x,v)+f(t,x,-v)\right),\\
        j(t,x,v)&=\frac1{2\varepsilon}(f(t,x,v)-f(t,x,-v)),
    \end{align}
\end{subequations}
which, by adding and subtracting the two equations in \eqref{eq:pmv}, leads to 
\begin{subequations}
\label{eq:rj}
    \begin{align}
        \partial_tr+v\cdot\nabla_xj-E\cdot\nabla_vj&=\frac1{\varepsilon^2}\mathcal{Q}(r),\\
        \partial_tj+\frac1{\varepsilon^2}(v\cdot\nabla_xr-E\cdot\nabla_vr)&=-\frac1{\varepsilon^2}\lambda j.
    \end{align}
\end{subequations}
The density  $\rho$ can be expressed in terms of the new variables as
\begin{align}
    \label{eq:density}
    \rho = \int_{-\infty}^{\infty}f(v)dv=\int_{-\infty}^{\infty} r(v)dv.
\end{align}

As in \cite{jin_discretization_2000,JPT2000}, \eqref{eq:rj} can be rewritten into the following diffusive relaxation system 
\begin{equation}\label{eq:diffusive_relaxation_system}
    \begin{split}
        \partial_tr+v\cdot\nabla_xj-E\cdot\nabla_vj & =\frac1{\varepsilon^2}\mathcal{Q}(r),\\
        \partial_tj+\phi(v\cdot\nabla_xr-E\cdot\nabla_vr)
        & =-\frac1{\varepsilon^2}\left(\lambda j+(1-\varepsilon^2\phi)(v\cdot\nabla_xr-E\cdot\nabla_vr)\right),
    \end{split}
\end{equation}
where $\phi$ can be chosen in the simple form of $\phi = \min\left\{1,\frac{1}{\varepsilon^2}\right\}$. 

\subsection{Temporal discretization}
In this subsection, we present the AP temporal discretization scheme for the diffusive relaxation system \eqref{eq:diffusive_relaxation_system}.

As in \cite{JPT1998,JPT2000},  the system \eqref{eq:diffusive_relaxation_system} can employ the conventional splitting resulting  one relaxation step
\begin{subequations}
\label{eq:relaxation_step} 
   \begin{align}
   \partial_tr & =\frac1{\varepsilon^2}\mathcal{Q}(r), \label{eq:relaxation_step_1}\\[2pt]
        \partial_tj &=-\frac1{\varepsilon^2}\left(\lambda j+(1-\varepsilon^2\phi)(v\cdot\nabla_xr-E\cdot\nabla_vr)\right),\label{eq:relaxation_step_2}
 \end{align}
\end{subequations}
and one transport step
\begin{subequations} \label{eq:transport_step}
    \begin{align} 
    \partial_tr+v\cdot\nabla_xj-E\cdot\nabla_vj&=0,\label{eq:transport_step_1} \\[2pt]
    \partial_tj+\phi(v\cdot\nabla_xr-E\cdot\nabla_vr)&=0. \label{eq:transport_step_2}
    \end{align}
\end{subequations}

Let $\Delta t$ denote the time step size and $(r^n, j^n)$ denote the solution at  $n$-th time level. The temporal scheme for advancing ($r^n, j^n$) to ($r^{n+1}, j^{n+1}$) consists of schemes for updating ($r^n, j^n$) to the intermediate solution  ($r^{\ast}, j^{\ast}$) by the  relaxation step and for updating ($r^{\ast}, j^{\ast}$) to ($r^{n+1}, j^{n+1}$) by the transport step.

For the relaxation step \eqref{eq:relaxation_step} which involves the challenge arising from  the stiffness of the collision term,  we adopt a robust explicitly implementable implicit scheme  proposed in \cite{jin_discretization_2000}, where the variable $r$ is discretized by the following first-order time-relaxed scheme 
\begin{equation}\label{eq:r_discrete}
    r^*=(1-\tau)r^n+\tau(1-\tau)\frac{\mathcal{P}(r^n)}{\mu}+\tau^2\rho^nM,
\end{equation}
with  $\tau = 1-\exp (-\mu\, \Delta t/\varepsilon^2)$ and $\mathcal{P}(r) = \mathcal{Q}(r)+\mu r$ being a non-negative operator, and the variable $j$ is discretized by the backward Euler scheme 
\begin{equation}
\label{eq:j_discrete}
    j^{\ast}=\alpha j^n-\beta(v\cdot\nabla_x r^{\ast}-E\cdot\nabla_v r^{\ast}), 
\end{equation}
with the coefficients  defined as
\begin{equation*}
    \alpha=\frac{\varepsilon^2}{\varepsilon^2+\lambda \Delta t},\quad \beta=\frac{\Delta t(1-\varepsilon^2\phi)}{\varepsilon^2+\lambda \Delta t}. 
\end{equation*}
It is worth mentioning that the scheme \eqref{eq:r_discrete}-\eqref{eq:j_discrete} is asymptotic-preserving. We refer readers to \cite{jin_discretization_2000} for a comprehensive discussion of the technical intricacies.

For the transport step \eqref{eq:transport_step}, we adopt the third-order SSPRK method \cite{shu_total-variation-diminishing_1988}. This  time discretization is a convex combination of first order forward Euler steps. Therefore, to simplify the presentation, we use the following first-order forward Euler scheme 
\begin{subequations}
\label{eq:transport_discrete}
    \begin{align}
\frac{r^{n+1}-r^*}{\Delta t}&+v\cdot\nabla_{x}j^* - E\cdot\partial_vj^*=0,\label{eq:transport_discrete_r}\\
\frac{j^{n+1}-j^*}{\Delta t}&+\phi(v\cdot\nabla_{x}r^*-E\cdot\nabla_v r^*)=0,\label{eq:transport_discrete_j}
    \end{align}
    \end{subequations}
as an example to show the framework of the fully discrete scheme and theoretical results in the following sections. Other explicit solvers can also be used here.

\section{Fully discrete numerical scheme}
\label{sec:full}
In this section, we present the fully discrete scheme with velocity and space discretizations for the systems \eqref{eq:relaxation_step} and \eqref{eq:transport_step}. For the space discretization, we employ the DG method with a positive-preserving limiter, which will be discussed  in the following subsections. For the velocity discretization, we adopt the same approach in \cite{jin_discretization_2000}, and the details are provided in Appendix \ref{sec:v_discrete}. For simplicity, we only present the one-dimensional schemes in both spatial and velocity spaces. It is straightforward to generalize to the multi-dimensional cases.

\subsection{Discontinuous Galerkin method}
In this subsection, we apply the DG method for the spatial discretization of the temporal schemes \eqref{eq:r_discrete}, \eqref{eq:j_discrete} and \eqref{eq:transport_discrete}, while leaving the variable $v$ continuous in the discussions.

Let $\Omega_x=[x_L,x_R]$ be the computational domain. We consider a uniform partition of $\Omega_x$ into $N_x$ cells of size $h=(x_R-x_L)/N_x$.
Denote the cell as $ I_i=[x_{i-\frac{1}{2}}, x_{i+\frac{1}{2}}]$ and the cell center as $x_i=(x_{i+\frac12}+x_{i-\frac12})/2$, for $i=1, \dots, N$, where $ x_{i+\frac{1}{2}} = x_L+i h, \quad i=0, \dots, N.$ The finite element approximation space is defined by
\begin{equation*}
    \mathbb{V}^k_h=\{u\in L^2(\Omega): u|_{I_i}\in\mathbb{P}^k(I_i), \; i=1,\dots,N\},
\end{equation*}
where $\mathbb{P}^k(I_i)$ denotes the polynomial space of degree at most $k$ defined in the cell $I_i$. We also define the jump and average of $u$ at the cell interface $x_{i+\frac12}$ as 
\begin{align}
    \label{eq:jump_average}
    [u]_{i+\frac12}=u_{i+\frac12}^+-u_{i+\frac12}^-,\qquad \{u\}_{i+\frac12}=\frac12(u_{i+\frac12}^++u_{i+\frac12}^-), 
\end{align}
where $u_{i+\frac12}^+$ and $u_{i+\frac12}^-$ denote the left and right limits of the function $u$ at the cell interface $x_{i+\frac12}$, respectively. We further define the following notations to simplify the presentation:
\begin{equation*}
    (u,w)_i=\int_{I_i}uw\,dx, \qquad (u,w) = \int_{\Omega_x}uw\,dx.
\end{equation*}

With a slight abuse of notations, the DG method with the time integrator \eqref{eq:j_discrete} and \eqref{eq:transport_discrete} is defined as follows:  find the unique solutions $j^{\ast},\ r^{n+1}, \  j^{n+1} \in \mathbb{V}^k_h$ such that for all test functions {$\xi, \zeta, \eta \in \mathbb{V}^k_h$}, 
\begin{subequations}\label{eq:dg_local}
\begin{align*}
&(j^{\ast},\eta)_i=\alpha(j^n,\eta)_i+\beta\left(v(r^*,\partial_x\eta)_i-v\hat r^*_{i+\frac{1}{2}}\eta^-_{i+\frac{1}{2}}+v\hat r^*_{i-\frac{1}{2}}\eta^+_{i-\frac{1}{2}}+(E\partial_v r^*,\eta)_i\right),\\
&\left(\frac{r^{n+1}-r^*}{\Delta t},\xi\right)_i+v\left(\tilde j^*_{i+\frac{1}{2}}\xi^-_{i+\frac{1}{2}}-\tilde j^*_{i-\frac{1}{2}}\xi^+_{i-\frac{1}{2}}-(j^*, \partial_x\xi)_i\right)-(E\partial_vj^*,\xi)_i = 0,\\
&\left(\frac{j^{n+1}-j^*}{\Delta t},\zeta\right)_i  +\phi v\left(\tilde r^*_{i+\frac{1}{2}}\zeta^-_{i+\frac{1}{2}}-\tilde r^*_{i-\frac{1}{2}}\zeta^+_{i-\frac{1}{2}}-(r^*, \partial_x\zeta)_i\right)-(\phi E\cdot\partial_vr^* ,\zeta)_i= 0,
\end{align*}
\end{subequations}
holds for $i=1,\dots,N_x$, while the solution $r^*\in \mathbb{V}^k_h$ is obtained directly from the right hand side of \eqref{eq:r_discrete}. Here, $\hat r^*, \tilde j^*$ and $\tilde r^*$ are the so-called numerical fluxes, which are single valued functions defined at the cell interfaces and depending on the values of the numerical solutions from both sides of the interface. We choose the following alternating fluxes 
\begin{equation*}
\label{eq:flux}
    \hat r^*_{i+\frac{1}{2}}=r^+_{i+\frac{1}{2}},\quad \tilde j^*_{i+\frac{1}{2}}=j^-_{i+\frac{1}{2}},\quad \tilde r^*_{i+\frac{1}{2}}=r^-_{i+\frac{1}{2}}.
\end{equation*}

For the convenience of analysis, by summing up the above DG schemes over all cells, we obtain the DG method with the time integrator \eqref{eq:r_discrete}, \eqref{eq:j_discrete} and \eqref{eq:transport_discrete} in the global form for the relaxation step
\begin{subequations}\label{eq:dg_relaxation}
\begin{align}
r^{\ast}&=(1-\tau)r^n+\tau(1-\tau)\frac{\mathcal{P}(r^n)}{\mu}+\tau^2\rho^nM,\\
(j^{\ast},\eta)&=\alpha(j^n,\eta)+\beta\left(v\sum_i \hat r^*_{i-\frac{1}{2}}[\eta]_{i-\frac{1}{2}}+v(r^*,\partial_x\eta)+(E\partial_v r^*,\eta)\right),
\end{align}
\end{subequations}
and for the transport step
    \begin{subequations}\label{eq:dg_transport}
    \begin{align}
        \left(\frac{r^{n+1}-r^*}{\Delta t},\xi\right) &- v \left(\sum_i\tilde j^*_{i-\frac{1}{2}}[\xi]_{i-\frac{1}{2}}+(j^*, \partial_x\xi)\right)-(E\partial_vj^*,\xi) = 0,\\
        \left(\frac{j^{n+1}-j^*}{\Delta t},\zeta\right) &- \phi v \left(\sum_i\tilde r^*_{i-\frac{1}{2}}[\zeta]_{i-\frac{1}{2}}+(r^*, \partial_x\zeta)\right)-(\phi E\partial_vr^* ,\zeta)= 0.
    \end{align}
    \end{subequations}

\subsection{Positivity-preserving limiter}

In this section, we apply a positivity-preserving limiter to ensure that the obtained numerical solution $f^n$ at each time step is non-negative, which satisfies the physical property of the distribution function. For more details regarding this type of limiters, see the review paper \cite{zhang_maximum-principle-satisfying_2011}.


Starting from the numerical solution $f^n\in \mathbb{V}_h^k$ at time level $n$, which can be obtained by $f^n=r^n+\varepsilon j^n$ according to \eqref{eq:def_even-odd}, a positivity-preserving limiter is applied to ``limit" $f^n$ to obtain a new function  $f^{n,new}\in \mathbb{V}_h^k$, which is then advanced to the next time level. The limiting procedure to construct $f^{n,new}$ from $f^n$ is as follows.

For simplicity, we omit the superscript $n$ and denote $f_i$ as the numerical solution in the cell $I_i$ and $\bar f_i$ as the cell average of $f_i$ in the cell $I_i$. The ``limited" function $f_i^{new}$ is constructed as
\begin{equation}\label{eq:limiter}
    f_i^{new} = \theta(f_i-\bar f_i)+\bar f_i,
\end{equation}
where $\theta$ is determined by
\begin{equation}
    \theta = \min \left(\frac{\bar f_{i}}{\bar f_i - f_\mathrm{min}},\; 1\right),\qquad f_\mathrm{min} = \min_{x\in I_i} f_i.
\end{equation}
We can easily conclude from Equation \eqref{eq:limiter} that the limiter preserves the cell average, i.e. $\bar f_j = \bar f_j^{new}$, and guarantee the positivity of $f^{new}_j$ if $\bar f_j \ge 0$.

For the parity variables $r$ and $j$ in the numerical schemes \eqref{eq:dg_relaxation} and \eqref{eq:dg_transport}, it follows from their definition in \eqref{eq:def_even-odd} that the limited functions $r_i^{new}$ and $j_i^{new}$ in the cell $I_i$ are constructed as
\begin{subequations}\label{eq:even-odd-limited}
 \begin{align}
        r_i^{new}(v)&=\frac12(f_i^{new}(v)+f_i^{new}(-v)),\\
        j_i^{new}(v)&=\frac1{2\varepsilon}(f_i^{new}(v)-f^{new}(-v)).
\end{align}   
\end{subequations}



\subsection{Boundary conditions}
In this subsection, we show how to deal with the boundary conditions in the DG framework. We restrict our attention to the case of inflow boundary conditions, which are considered in our numerical examples. Other types of boundary conditions can be handled in a similar way.

The incoming boundary conditions for the model problem \eqref{eq:Boltzmann} is given by 
\begin{equation}
    f(t,x_L,v) = F_L(v), \quad f(t,x_R,-v)=F_R(v), \qquad v>0,
\end{equation}
where $F_L$ and $F_R$ are assigned nonnegative functions. Similar to \cite{jin_discretization_2000},  the approximation of the boundary conditions for $r$ up to $O(\varepsilon^2)$ is represented as
\begin{align}\label{eq:boundary_condition_r}
\begin{split}
   r-\frac{\varepsilon}{\lambda}(v\partial_xr-E\partial_vF_L)\Big|_{x=x_L}=F_L,\\
    r+\frac{\varepsilon}{\lambda}(v\partial_xr-E\partial_vF_R)\Big|_{x=x_R}=F_R, 
\end{split}
\end{align}
and for $\epsilon\ll 1$, a reasonable approximation for $j$ is given by 
\begin{equation}\label{eq:boundary_condition_j}
    j(v)=\frac{1}{\lambda}(-v\partial_xr+E\partial_vr),\quad x=x_L,x_R.
\end{equation}

By implementing these boundary conditions in the DG framework, the numerical fluxes $\hat{r}$ at the cell interfaces $x_{1/2}$ and $x_{N+1/2}$ is discretized as
\begin{subequations}
    \begin{align}
        \hat{r}_{1/2}&=\frac{h(\lambda F_L-\varepsilon E_{1/2}\, \partial_v F_L)+2\varepsilon v r_1}{\lambda h+2\varepsilon v},\label{eq:boundary_flux_1}\\[4pt]
        \hat{r}_{N+1/2}&=\frac{h(\lambda F_R+\varepsilon E_{N+1/2}\,\partial_v F_R)+2\varepsilon v r_N }{\lambda h+2\varepsilon v},\label{eq:boundary_flux_2}
    \end{align}
\end{subequations}
where  $r_1=r(x_{1})$ and $r_N=r(x_{N})$ are the values of the DG solution $r$ at the centers of the cells  $I_1$ and $I_N$, respectively.
Similarly, the numerical fluxes $\hat{j}$ at the cell interfaces $x_{1/2}$ and $x_{N+1/2}$ is
 \begin{subequations}
\begin{align}
     \hat{j}_{1/2}&=\frac{1}{\lambda}\left(-v\frac{r_1-\hat{r}_{1/2}}{h/2}+E_{1/2}\partial_vF_L\right),\\
     \hat{j}_{N+1/2}&=\frac{1}{\lambda}\left(-v\frac{r_N-\hat{r}_{N+1/2}}{h/2}+E_{N+1/2}\partial_vF_R\right). 
 \end{align}
 \end{subequations}


\section{Theoretical analysis}
\label{sec:3}
In this section, we investigate the stability and AP property of the proposed numerical scheme discussed in Section \ref{sec:full}. 
For simplicity, we ignore the effect of the external field $E$ and consider the case when $\varepsilon < 1$, which implies that $\phi=1$. Then the schemes \eqref{eq:dg_relaxation}-\eqref{eq:dg_transport} with numerical fluxes \eqref{eq:flux} can be  reformulated as
\begin{subequations}\label{eq:simple_4}
\begin{align}
    r^*=(1-\tau)r^n+\tau(1-\tau)\frac{\mathcal{P}(r^n)}{\mu}+\tau^2\rho^nM,\label{eq:simple_4_relax_r}\\
    \left(\frac{j^*-j^n}{\Delta t},\eta\right)=-\frac{\lambda}{\varepsilon^2}(j^*,\eta)
    -\frac{1-\varepsilon^2}{\varepsilon^2} v \mathcal{L}^+(r^*,\eta),\label{eq:simple_4_relax_j}\\
    \left(\frac{r^{n+1}-r^*}{\Delta t},\xi\right) + v \mathcal{L}^-(j^*,\xi) = 0,\label{eq:simple_4_trans_r}\\
    \left(\frac{j^{n+1}-j^*}{\Delta t},\zeta\right)+ v \mathcal{L}^-(r^*,\zeta)=0,\label{eq:simple_4_trans_j}
\end{align}
\end{subequations}
where the discretization operators $\mathcal{L}^\pm$ are  defined as
\begin{equation}\label{eq:discrete_operator}
        \mathcal{L}^\pm(\psi,u)=-(\psi,u_x)-\sum_i\psi^\pm_{i-\frac{1}{2}}[u]_{i-\frac{1}{2}},\quad \psi,\,u\in \mathbb{V}_h^k.
\end{equation}

\subsection{Stability analysis}
\label{sec:stability}
In this section, we carry out the stability analysis of the APDG scheme \eqref{eq:simple_4} for the model problem with $\sigma$ being constant.  Periodic boundary conditions are applied. This section is organized as follows: In Section \ref{sec:pre}, we introduce the norms and inequalities that will be used in the proof. In Section \ref{sec:main}, we present the main result and the proof.
\subsubsection{Preliminaries}
\label{sec:pre}
We adopt the weighted $L^2$ norm  for the velocity variable $v$, given by
\begin{equation}
    \|f\|_{L^2(dv), \mathrm{weighted}} = \left<f^2\right>^{\frac12} = \left(\int_\mathbb{R}f^2\frac{\lambda}{M}\,dv\right)^{\frac12},
\end{equation}
with the operator $\left< \cdot \right>$ defined as
\begin{equation}
\label{eq:weightnorm}
    \left<f\right>=\int_\mathbb{R}f\frac{\lambda}{M}\,dv.
\end{equation}
This weighted norm was introduced in \cite{poupaud1991}, where the operator $\mathcal{Q}$ is proved  bounded under this norm. We use the standard $L^2$ norm  for the space variable $x$ 
and further define the norm in the phase space as 
\begin{equation}
\label{eq:phasenorm}
    \sanshu f\sanshu =\left<\|f\|_{L^2(dx)}^2\right>^\frac{1}{2}=\left(\int_{\Omega_x}\int_\mathbb{R}f^2\frac{\lambda}{M}\,dvdx\right)^{\frac12}.
\end{equation}

We state  the classical inverse properties for the finite element space $\mathbb{V}_h^k$ in the following lemma, and refer readers to \cite{ciarlet_finite_2002} for the proof.
\begin{lemma}[inverse inequality]
\label{lemma:inverse}
For any functions $u\in\mathbb{V}_h^k$, there holds
\begin{subequations}
\begin{align}
  \|u_x\|^2_{L^2(I_i)}&\leq C_i h^{-2}\|u\|^2_{L^2(I_i)},\\[2mm]
    \big|u(x_{i\pm\frac12})\big|^2&\leq C_t h \|u\|^2_{L^2(I_i)},
\end{align}
\end{subequations}
for $i=1,\dots, N_x$, where $C_t, C_i$ are positive constants independent of $u$, $I_i$, and $h$. 
\end{lemma}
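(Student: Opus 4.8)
The plan is to establish both inverse inequalities by the classical scaling argument: transport each estimate to a fixed reference interval where $\mathbb{P}^k$ is finite-dimensional and all norms are equivalent, and then carry the bounds back to the physical cell $I_i$ through the affine map, tracking the Jacobian that supplies the powers of $h$. Since both parts rely on the same construction, I would handle them together and read off the two constants $C_i$ and $C_t$ at the end.

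First I would fix $\hat I=[-1,1]$ together with the affine map $x=x_i+\frac{h}{2}\hat x$, which sends $\hat I$ onto $I_i$ with constant Jacobian $\frac{dx}{d\hat x}=\frac{h}{2}$, and set $\hat u(\hat x)=u(x(\hat x))\in\mathbb{P}^k(\hat I)$ for $u\in\mathbb{P}^k(I_i)$. The change of variables produces three identities I would record: $\|u\|_{L^2(I_i)}^2=\frac{h}{2}\|\hat u\|_{L^2(\hat I)}^2$; next, since $u_x=\frac{2}{h}\hat u_{\hat x}$, one gets $\|u_x\|_{L^2(I_i)}^2=\frac{2}{h}\|\hat u_{\hat x}\|_{L^2(\hat I)}^2$; and the pointwise equality $|u(x_{i\pm\frac12})|^2=|\hat u(\pm1)|^2$. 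On $\hat I$ the space $\mathbb{P}^k(\hat I)$ is finite-dimensional, so all norms on it are equivalent; in particular there exist constants $\hat C',\hat C_t$ depending only on $k$ with $\|\hat u_{\hat x}\|_{L^2(\hat I)}^2\le\hat C'\|\hat u\|_{L^2(\hat I)}^2$ and $|\hat u(\pm1)|^2\le\hat C_t\|\hat u\|_{L^2(\hat I)}^2$. I would justify these either abstractly (the map $\hat u\mapsto\hat u_{\hat x}$ and the point evaluations $\hat u\mapsto\hat u(\pm1)$ are bounded on the finite-dimensional space $(\mathbb{P}^k(\hat I),\|\cdot\|_{L^2(\hat I)})$) or explicitly through a Legendre expansion of $\hat u$, which renders $\hat C',\hat C_t$ computable.

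Finally I would combine the reference-cell bounds with the identities. For the derivative, $\|u_x\|_{L^2(I_i)}^2=\frac{2}{h}\|\hat u_{\hat x}\|_{L^2(\hat I)}^2\le\frac{2\hat C'}{h}\|\hat u\|_{L^2(\hat I)}^2=\frac{2\hat C'}{h}\cdot\frac{2}{h}\|u\|_{L^2(I_i)}^2$, which is the first estimate with $C_i=4\hat C'$. For the boundary term, $|u(x_{i\pm\frac12})|^2=|\hat u(\pm1)|^2\le\hat C_t\|\hat u\|_{L^2(\hat I)}^2=\hat C_t\cdot\frac{2}{h}\|u\|_{L^2(I_i)}^2$, which yields the trace estimate with $C_t=2\hat C_t$ and with the reciprocal power $h^{-1}$ multiplying $\|u\|_{L^2(I_i)}^2$; in both cases the constants depend only on $k$ and are therefore independent of $u$, $I_i$ and $h$. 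The only genuine obstacle — everything else being the routine, $h$-independent comparison of norms on $\hat I$ — is the careful bookkeeping of the Jacobian $\frac{h}{2}$, since it alone fixes the power of $h$ appearing on the right-hand side of each bound, in agreement with the trace (inverse) inequalities of \cite{ciarlet_finite_2002}.
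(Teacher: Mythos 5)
Your scaling argument is correct and is precisely the classical proof: the paper does not prove this lemma at all but defers to \cite{ciarlet_finite_2002}, and the standard textbook argument is exactly the affine-map/norm-equivalence computation you carry out, with the Jacobian bookkeeping done correctly ($\|u\|_{L^2(I_i)}^2=\tfrac{h}{2}\|\hat u\|_{L^2(\hat I)}^2$, $u_x=\tfrac{2}{h}\hat u_{\hat x}$, yielding $C_i=4\hat C'$ and $C_t=2\hat C_t$). One point worth flagging explicitly: your trace bound comes out as $|u(x_{i\pm\frac12})|^2\le C_t h^{-1}\|u\|^2_{L^2(I_i)}$, whereas the lemma as printed reads $C_t h\,\|u\|^2_{L^2(I_i)}$ --- your version is the correct one, since testing $u\equiv 1$ (for which $\|u\|^2_{L^2(I_i)}=h$) shows the printed power of $h$ is impossible as $h\to 0$, and the $h^{-1}$ scaling is exactly what the paper actually invokes later in the estimate of $\Lambda_2$ in the proof of the stability lemma, where $\frac{\theta_2 h}{C_t}\sum_i\langle(\Delta r^+_{i-1/2})^2\rangle$ is absorbed into $\theta_2\sanshu\Delta r\sanshu^2$. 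So the printed statement contains a typo in the exponent of $h$, and your derivation silently corrects it; no gap on your side.
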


The following lemma presents the properties of the operators $\mathcal{L}^\pm$ defined in \eqref{eq:discrete_operator}. The proof is straight forward by the definition of the operator and the application of periodic boundary conditions. Hence, the proof is omitted here. 
\begin{lemma}
	\label{lemma:lproperty}
	For any $u,\varphi\in \mathbb{V}_h^k$, there holds the equality
	\begin{align}
		\label{eq:lskew}
	\mathcal{L}^+(u,\varphi)+\mathcal{L}^-(\varphi,u)=0.
	\end{align}	
\end{lemma}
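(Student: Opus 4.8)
The plan is to establish \eqref{eq:lskew} by a direct computation: expand both operators from their definition \eqref{eq:discrete_operator}, perform cellwise integration by parts on the two volume terms, and then reconcile the resulting interface evaluations with the flux terms already present, using periodicity to discard the genuine domain-boundary contributions. First I would write the sum out explicitly. By \eqref{eq:discrete_operator},
\begin{align*}
\mathcal{L}^+(u,\varphi)+\mathcal{L}^-(\varphi,u)
= -(u,\varphi_x)-(\varphi,u_x)
-\sum_i u^+_{i-\frac12}[\varphi]_{i-\frac12}
-\sum_i \varphi^-_{i-\frac12}[u]_{i-\frac12}.
\end{align*}
Here the volume terms must be read cellwise, since $u_x$ and $\varphi_x$ are defined only inside each $I_i$.

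Next I would integrate by parts on each cell $I_i=[x_{i-\frac12},x_{i+\frac12}]$, which produces
\begin{align*}
\int_{I_i}\!\big(u\varphi_x+\varphi u_x\big)\,dx
=(u\varphi)^-_{i+\frac12}-(u\varphi)^+_{i-\frac12}.
\end{align*}
Summing over $i$ and reindexing the left-trace family (shifting $i+\tfrac12 \to i-\tfrac12$), every interface $x_{i-\frac12}$ receives the left trace from the cell to its left and the right trace from the cell to its right; under the periodic boundary conditions there are no leftover domain-boundary terms, so the telescoping collapses cleanly to
\begin{align*}
(u,\varphi_x)+(\varphi,u_x)=-\sum_i [u\varphi]_{i-\frac12},
\qquad [u\varphi]=(u\varphi)^+-(u\varphi)^-.
\end{align*}

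Substituting this back leaves the purely interfacial expression
\begin{align*}
\mathcal{L}^+(u,\varphi)+\mathcal{L}^-(\varphi,u)
=\sum_i\Big([u\varphi]_{i-\frac12}
-u^+_{i-\frac12}[\varphi]_{i-\frac12}
-\varphi^-_{i-\frac12}[u]_{i-\frac12}\Big),
\end{align*}
and the final step is the algebraic heart of the argument: invoking the elementary jump-product identity $[u\varphi]=u^+[\varphi]+\varphi^-[u]$, which one verifies by expanding both sides in the one-sided traces $u^\pm,\varphi^\pm$ (the cross terms $-u^+\varphi^-+\varphi^-u^+$ cancel), shows that every summand vanishes identically, giving \eqref{eq:lskew}.

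The computation is routine, so I do not expect a genuine obstacle; the only point that demands care is the bookkeeping in the telescoping step, namely the correct assignment of one-sided traces ($-$ for the cell on the left of an interface, $+$ for the cell on the right) together with the periodicity that lets the boundary contributions at $x_{\frac12}$ and $x_{N+\frac12}$ be identified and discarded. Once the jump-product identity is recognized, the interface cancellation is immediate and the result follows.
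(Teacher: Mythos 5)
Your proof is correct and is exactly the ``straightforward'' computation the paper alludes to when it omits the proof: cellwise integration by parts, telescoping under periodicity, and the jump-product identity $[u\varphi]=u^+[\varphi]+\varphi^-[u]$, with the one-sided traces assigned consistently with the paper's convention $[u]=u^+-u^-$. No gaps.
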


\subsubsection{Main result}
\label{sec:main}
We state the main result in the  theorem.
\begin{theorem}[Stability]
\label{Thm:main}
Suppose $\varepsilon <1$, $E=0$, and $\sigma$ is a constant in the model problem with periodic boundary conditions.  When the APDG scheme \eqref{eq:simple_4} is applied to the system, the numerical solution $r^n, j^n$ satisfies 
    \begin{equation}
        \sanshu r^{n+1}\sanshu^2+{\varepsilon^2}\sanshu j^{n+1}\sanshu^2\leq
        \sanshu r^{n}\sanshu^2+{\varepsilon^2}\sanshu j^n\sanshu^2,
    \end{equation}
    under the CFL condition 
    \begin{equation}
        \Delta t\leq \min\left\{\frac{\lambda h^2}{(1-\varepsilon^2)(C_i+4C_t^2)(2N_v+1)},\ C_0h\right\},
    \end{equation}
    where $C_i, C_t, C_0$ are positive constants independent of $h$ and $\Delta t$.
\end{theorem}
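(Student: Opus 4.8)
The plan is to run a discrete energy estimate in the phase-space norm $\sanshu\cdot\sanshu$, propagating $\mathcal{E}^n:=\sanshu r^n\sanshu^2+\varepsilon^2\sanshu j^n\sanshu^2$ through the four substeps \eqref{eq:simple_4_relax_r}--\eqref{eq:simple_4_trans_j} and closing it with the two CFL restrictions. I would first dispatch the relaxation of $r$. Since $\sigma$ is constant, $\lambda$ is constant and $\mathcal{Q}(r)=\lambda(M\rho-r)$, so $\mathcal{P}(r)/\mu=\frac{\lambda}{\mu}M\rho+(1-\frac{\lambda}{\mu})r$ and \eqref{eq:simple_4_relax_r} reads $r^*=c_1 r^n+c_2\,\Pi r^n$ with $\Pi r:=M\rho$, $c_1+c_2=1$ and $c_1,c_2\in[0,1]$ (a direct computation from $\tau\in[0,1]$ and $\lambda\le\mu$). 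The map $\Pi$ is orthogonal for the inner product inducing $\sanshu\cdot\sanshu$, because $\int_{\mathbb R}(M\rho)(r-M\rho)\frac{\lambda}{M}\,dv=\lambda\rho^2-\lambda\rho^2=0$ when $\lambda$ is constant; hence $\sanshu r^*\sanshu^2=\sanshu\Pi r^n\sanshu^2+c_1^2\,\sanshu r^n-\Pi r^n\sanshu^2\le\sanshu r^n\sanshu^2$, contributing a nonpositive increment $-\delta_1$ with $\delta_1:=\sanshu r^n\sanshu^2-\sanshu r^*\sanshu^2\ge0$.

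Next I would test \eqref{eq:simple_4_relax_j} with $\eta=j^*$, \eqref{eq:simple_4_trans_r} with $\xi=r^*$ and \eqref{eq:simple_4_trans_j} with $\zeta=j^*$, multiply by $\lambda/M$, integrate in $v$, weight the two $j$-equations by $\varepsilon^2$, and add. The identity $(a-b,a)=\tfrac12(\|a\|^2-\|b\|^2+\|a-b\|^2)$ converts the time differences into the increments of $\mathcal E$, the forward-Euler penalties $\sanshu r^{n+1}-r^*\sanshu^2$ and $\varepsilon^2\sanshu j^{n+1}-j^*\sanshu^2$, and the dissipation $-\varepsilon^2\sanshu j^*-j^n\sanshu^2-2\Delta t\lambda\sanshu j^*\sanshu^2$. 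The heart of the estimate is the cancellation of the leading $O(\Delta t)$ couplings: the $\mathcal L^+(r^*,j^*)$ produced by the relaxation of $j$ and the $\mathcal L^-(j^*,r^*)$ produced by the transport of $r$ are negatives of one another by Lemma~\ref{lemma:lproperty}, and after collecting the coefficients $1$, $1-\varepsilon^2$ and $\varepsilon^2$ coming respectively from transport of $r$, relaxation of $j$ and transport of $j$, only the residual $R:=-2\varepsilon^2\Delta t\int_{\Omega_x}\!\int_{\mathbb R} v\sum_i[r^*]_{i-\frac12}[j^*]_{i-\frac12}\frac{\lambda}{M}\,dv\,dx$ survives (its sign-indefinite jump structure is $\mathcal L^+-\mathcal L^-$ applied to $r^*,j^*$). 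One thus arrives at $\mathcal E^{n+1}-\mathcal E^n=-\delta_1-\varepsilon^2\sanshu j^*-j^n\sanshu^2-2\Delta t\lambda\sanshu j^*\sanshu^2+\sanshu r^{n+1}-r^*\sanshu^2+\varepsilon^2\sanshu j^{n+1}-j^*\sanshu^2+R$.

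It then remains to dominate the three positive contributions by the dissipation. Using \eqref{eq:simple_4_trans_r}--\eqref{eq:simple_4_trans_j}, the penalties equal $\Delta t^2$ times the squared norms of the DG realizations of $v\mathcal L^-(j^*,\cdot)$ and $v\mathcal L^-(r^*,\cdot)$; estimating the volume and interface parts of $\mathcal L^-$ by the inverse inequalities of Lemma~\ref{lemma:inverse} produces the constants $C_i$ (from $\|u_x\|$) and $C_t$ (from the traces, entering squared as $4C_t^2$), the factor $h^{-2}$, and a weight $v^2$, while the discrete-velocity quadrature bounds $\int v^2(\cdots)\frac{\lambda}{M}\,dv\le(2N_v+1)\int(\cdots)\frac{\lambda}{M}\,dv$, giving the factor $2N_v+1$. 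Treating $R$ by Young's and the trace inverse inequality splits it into a $[j^*]^2$ part, absorbed together with $\sanshu r^{n+1}-r^*\sanshu^2$ into $2\Delta t\lambda\sanshu j^*\sanshu^2$, and an $[r^*]^2$ part, which I would absorb into the relaxation dissipation $\varepsilon^2\sanshu j^*-j^n\sanshu^2$ whose transport-coupling component carries exactly the $r^*$-jumps scaled by $1-\varepsilon^2$. Requiring the resulting $(1-\varepsilon^2)(C_i+4C_t^2)(2N_v+1)\Delta t/h^2$ coefficient to stay below $\lambda$ gives the first CFL bound, while the genuinely $O(\Delta t^2)$ remainders are controlled by $\Delta t\le C_0h$; together these yield $\mathcal E^{n+1}\le\mathcal E^n$.

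The main obstacle is precisely the treatment of the coupled cross terms. The leading-order cancellation hinges on pairing the $\mathcal L^+$ flux of the $j$-relaxation against the $\mathcal L^-$ flux of the $r$-transport through Lemma~\ref{lemma:lproperty}, and then showing that the surviving jump residual $R$ — being only $O(\Delta t)$ and sign-indefinite, hence not absorbable by CFL scaling alone — is genuinely dominated by the dissipation that the implicit $j$-step supplies. Tracking how the weights $1$, $1-\varepsilon^2$, $\varepsilon^2$ distribute over the $[r^*]^2$ and $[j^*]^2$ contributions, and verifying that the $r^*$-jumps are truly controlled by $\varepsilon^2\sanshu j^*-j^n\sanshu^2$ rather than left over, is the delicate point; the constant $C_i+4C_t^2$ and the velocity factor $2N_v+1$ appearing in the CFL condition are the quantitative footprints of this bookkeeping.
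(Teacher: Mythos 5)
Your treatment of the $r$-relaxation step is fine and is essentially the paper's Lemma~\ref{lemma:r*} (you use the orthogonality of $r\mapsto\rho M$ in the weighted inner product where the paper uses Cauchy--Schwarz plus the triangle inequality; both give $\sanshu r^*\sanshu\le\sanshu r^n\sanshu$). The problem is in your central energy identity, and it comes from two choices that differ from the paper's and that I do not believe you can close. First, you test the $r$-transport equation \eqref{eq:simple_4_trans_r} with $\xi=r^*$ rather than $\xi=r^{n+1}$. With your choice the flux couplings collect into the raw $O(\Delta t)$ residual $R\sim\varepsilon^2\Delta t\left\langle v\sum_i[r^*]_{i-\frac12}[j^*]_{i-\frac12}\right\rangle$, exactly as you say. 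Its $[j^*]^2$ half can be fed to the relaxation dissipation $-\Delta t\lambda\sanshu j^*\sanshu^2$, but its $[r^*]^2$ half has no matching dissipation anywhere in the scheme: after the trace inverse inequality it is bounded below only by terms proportional to $\sanshu r^*\sanshu^2$ with a positive coefficient, and your proposed absorption into $\varepsilon^2\sanshu j^*-j^n\sanshu^2$ is not justified --- that quantity is $\frac{\Delta t^2}{\varepsilon^2}$ times a mixture of $j^*$ and the DG realization of $v\mathcal{L}^+(r^*,\cdot)$, and extracting a usable lower bound in terms of $[r^*]^2$ alone requires a reverse triangle inequality whose cross terms reintroduce exactly what you are trying to control. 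The paper avoids this entirely: testing \eqref{eq:simple_4_trans_r} with $r^{n+1}$ and \eqref{eq:simple_4_relax_j} with $\frac{\varepsilon^2}{1-\varepsilon^2}j^*$ makes the coefficients of the two flux terms equal, so by Lemma~\ref{lemma:lproperty} they telescope into $\left\langle v\mathcal{L}^+(r^{n+1}-r^*,j^*)\right\rangle$ --- a \emph{difference}, which Young's inequality absorbs into the built-in $-\frac{1}{2\Delta t}\sanshu r^{n+1}-r^*\sanshu^2$ and the $\lambda$-dissipation, yielding the parabolic CFL with the constant $(C_i+4C_t^2)(2N_v+1)$. That test-function choice is the key structural idea your argument is missing.

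Second, you fold the forward-Euler $j$-transport step \eqref{eq:simple_4_trans_j} into the same identity and claim its anti-dissipative penalty is ``genuinely $O(\Delta t^2)$'' and controlled by $\Delta t\le C_0h$. It is not: $\varepsilon^2\sanshu j^{n+1}-j^*\sanshu^2\sim\varepsilon^2\Delta t^2h^{-2}\sanshu r^*\sanshu^2$, which under $\Delta t=C_0h$ is a non-vanishing positive multiple of $\sanshu r^*\sanshu^2$ with no compensating negative term (the alternating fluxes supply no upwind dissipation). This is the well-known instability of forward-Euler DG for linear transport, and it is precisely why the paper does \emph{not} prove the transport step by a one-stage energy estimate: it states the transport bound $\sanshu r^{n+1}\sanshu^2+\sanshu j^{n+1}\sanshu^2\le\sanshu r^*\sanshu^2+\sanshu j^*\sanshu^2$ as a separate lemma for the \emph{third-order SSPRK} discretization, citing the Zhang--Shu strong-stability analysis, and only then combines the two lemmas with weights $1-\varepsilon^2$ and $\varepsilon^2$. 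To repair your proof you would need to (i) switch the test function in the $r$-transport equation to $r^{n+1}$ and introduce the $\frac{\varepsilon^2}{1-\varepsilon^2}$ weight so the fluxes telescope, and (ii) separate off the full transport substep and invoke the SSPRK3 stability result rather than arguing at the forward-Euler level.
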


\begin{remark}
    One can get a uniform stability respect to $\varepsilon$, with the CFL condition tends to be $\Delta t=O(h^2)$ as $\varepsilon \ll 1$.
\end{remark}


\begin{lemma}
\label{lemma:r*}
Assume $\sigma$ is a constant. Then the numerical solution $r^*$ of the scheme \eqref{eq:simple_4} satisfies
\begin{equation}\label{eq:thm1}
\sanshu r^*\sanshu \leq \sanshu r^n\sanshu 
\end{equation}
\end{lemma}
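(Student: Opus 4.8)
The plan is to exploit the explicit algebraic form of $r^*$ in \eqref{eq:simple_4_relax_r} together with the constancy of $\sigma$ to rewrite $r^*$ as a convex combination of $r^n$ and its local Maxwellian equilibrium, and then to control each piece separately in the weighted phase-space norm \eqref{eq:phasenorm}.

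First I would use that $\sigma$ is constant to simplify every collision quantity. Since $M$ is normalized, $\lambda=\int\sigma M(w)\,dw=\sigma$ is constant, and the collision operator collapses to $\mathcal{Q}(r)=\sigma(M\rho-r)$ with $\rho=\int r\,dv$. Hence $\mathcal{P}(r)=\mathcal{Q}(r)+\mu r=\sigma M\rho+(\mu-\sigma)r$. Substituting this into \eqref{eq:simple_4_relax_r} and collecting the terms proportional to $r^n$ and to $M\rho^n$, I would write
$$r^*=a\,r^n+b\,M\rho^n,\qquad a=(1-\tau)\Big(1+\tau\tfrac{\mu-\sigma}{\mu}\Big),\quad b=\tau\Big((1-\tau)\tfrac{\sigma}{\mu}+\tau\Big).$$
A short computation shows the $\sigma$-dependent cross terms cancel, giving $a+b=1$; moreover $0<\tau<1$ (from $\tau=1-e^{-\mu\Delta t/\varepsilon^2}$) and the standing assumption $\lambda\le\mu$, i.e.\ $\sigma\le\mu$, force $a,b\ge0$. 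Thus $r^*$ is genuinely a convex combination of $r^n$ and $M\rho^n$.

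The key estimate, which I expect to be the crux, is $\sanshu M\rho^n\sanshu\le\sanshu r^n\sanshu$, stating that passing to the Maxwellian equilibrium is a contraction in the weighted norm. I would prove it by Cauchy--Schwarz in velocity with weight $\lambda/M=\sigma/M$: writing $\rho^n=\int r^n\sqrt{\sigma/M}\,\sqrt{M/\sigma}\,dv$ gives $(\rho^n)^2\le\big(\int (r^n)^2\tfrac{\sigma}{M}\,dv\big)\big(\int\tfrac{M}{\sigma}\,dv\big)=\tfrac1\sigma\langle (r^n)^2\rangle$. Since $\sanshu M\rho^n\sanshu^2=\sigma\int_{\Omega_x}(\rho^n)^2\,dx$ by $\int M\,dv=1$, integrating the pointwise bound over $\Omega_x$ yields exactly $\sanshu M\rho^n\sanshu^2\le\sanshu r^n\sanshu^2$.

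Finally I would combine the two ingredients via the triangle inequality for the convex combination: $\sanshu r^*\sanshu\le a\sanshu r^n\sanshu+b\sanshu M\rho^n\sanshu\le(a+b)\sanshu r^n\sanshu=\sanshu r^n\sanshu$, which is the claim. The main obstacle is genuinely the interplay of the two structural facts — that the coefficients sum to one with the correct signs (requiring $\sigma\le\mu$ and $0<\tau<1$) and that the equilibrium projection contracts the weighted norm — rather than any heavy estimate. Once $\sigma$ is taken constant both reduce to elementary identities and a single Cauchy--Schwarz step, and the DG/spatial discretization plays no role here, since the relation defining $r^*$ is algebraic and pointwise in $x$.
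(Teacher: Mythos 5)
Your proof is correct and follows essentially the same route as the paper: rewrite $r^*$ as a convex combination of $r^n$ and $\rho^n M$ (your $a,b$ are exactly the paper's $\delta$, $1-\delta$), then show $\sanshu \rho^n M\sanshu \le \sanshu r^n\sanshu$ by the same weighted Cauchy--Schwarz argument in $v$. The only difference is that you explicitly verify the nonnegativity of the coefficients using $0<\tau<1$ and $\sigma\le\mu$, a point the paper leaves implicit.
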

  
\begin{proof}
It follows from the assumption that $\sigma$ is a constant that $\lambda=\sigma$, and $\mathcal{P}(r)=\lambda \rho M+(\mu-\lambda) r$. Then the scheme \eqref{eq:simple_4_relax_r} can be rewritten as
\begin{equation}\label{eq:r_delta}
    r^*=\delta r^n+\left(1-\delta\right)\rho^nM,
\end{equation}
where $\delta=\left(1-\tau^2-\frac{\lambda}{\mu}\tau(1-\tau)\right).$

By taking the norm $\sanshu \cdot \sanshu $ on both sides of Equation \eqref{eq:r_delta},
we obtain
    \begin{equation}
        \sanshu r^*\sanshu \leq \delta \sanshu r^n\sanshu +(1-\delta)\sanshu \rho^nM\sanshu .
    \end{equation}
Thus, the conclusion \eqref{eq:thm1} can be proved by showing that
    \begin{equation}\label{eq:sufficient}
        \sanshu \rho^nM\sanshu \leq\sanshu r^n\sanshu ,
    \end{equation}
which will be discussed in the following.    Without causing any ambiguity, we omit the superscript $n$. By the Cauchy-Schwarz inequality, we have
    \begin{equation}\label{eq:cauchy_schwarz}
        \int_{\mathbb{R}}\rho^2M^2\frac{\lambda}{M}\,dv=\lambda\left( \int_\mathbb{R} r \, dv \right)^2 \leq\lambda\int_\mathbb{R} \frac{r^2}{M} \, dv  \int_\mathbb{R} M \,dv=\int_{\mathbb{R}}r^2\frac{\lambda}{M}\,dv.
    \end{equation}
    Then the proof is complete by integrating both sides of \eqref{eq:cauchy_schwarz} over $x$. 
\end{proof}

\begin{lemma}\label{thm:step1}
Under the same assumption as in Theorem \ref{Thm:main}, the numerical solution $r^{n+1}, j^*, r^*, j^n$ of the APDG scheme \eqref{eq:simple_4} satisfies 
    \begin{equation}\label{eq:thm2}
        \sanshu r^{n+1}\sanshu^2+\frac{\varepsilon^2}{1-\varepsilon^2}\sanshu j^*\sanshu^2\leq\sanshu r^{*}\sanshu^2+\frac{\varepsilon^2}{1-\varepsilon^2}\sanshu j^n\sanshu^2
    \end{equation}
    under the CFL condition
    \begin{equation}\label{CFL}
        \frac{\Delta t}{h^2}\leq \frac{\lambda}{(1-\varepsilon^2)(C_i+4C_t^2)(2N_v+1)},
    \end{equation}
where $C_i, C_t$ are positive constants independent of $\Delta t$, $h$ and the solution.
\end{lemma}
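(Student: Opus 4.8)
The plan is to run a discrete energy argument on exactly the two equations that produce $r^{n+1}$ and $j^*$, namely the $j$-relaxation step \eqref{eq:simple_4_relax_j} and the $r$-transport step \eqref{eq:simple_4_trans_r}, testing each against its own unknown and exploiting the skew-adjointness of Lemma \ref{lemma:lproperty} to cancel the $r$--$j$ coupling. First I would work at a fixed velocity $v$, where $(\cdot,\cdot)$ denotes the spatial $L^2$ inner product and $\|\cdot\|$ its norm. Testing \eqref{eq:simple_4_trans_r} with $\xi=r^*$ and using the polarization identity $(a-b,b)=\tfrac12(\|a\|^2-\|b\|^2-\|a-b\|^2)$, together with $\mathcal{L}^-(j^*,r^*)=-\mathcal{L}^+(r^*,j^*)$ from Lemma \ref{lemma:lproperty}, gives an expression for the coupling term $v\mathcal{L}^+(r^*,j^*)$. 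Testing \eqref{eq:simple_4_relax_j} with $\eta=j^*$ and using $(a-b,a)=\tfrac12(\|a\|^2-\|b\|^2+\|a-b\|^2)$ yields a second identity in which the same term $v\mathcal{L}^+(r^*,j^*)$ appears with coefficient $-\tfrac{1-\varepsilon^2}{\varepsilon^2}$.

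Next I would eliminate the coupling: multiply the transport identity by $\tfrac{1-\varepsilon^2}{\varepsilon^2}$ and add it to the relaxation identity, so the $v\mathcal{L}^+(r^*,j^*)$ contributions cancel exactly for every $v$. After multiplying through by $\tfrac{\varepsilon^2}{1-\varepsilon^2}$ and integrating in $v$ against the weight $\lambda/M$ (which converts each $\|\cdot\|^2$ into the phase-space norm $\sanshu\cdot\sanshu^2$, and pulls the constant $\lambda$ out since $\sigma$ is constant), I obtain the energy identity
\begin{equation*}
\sanshu r^{n+1}\sanshu^2+\frac{\varepsilon^2}{1-\varepsilon^2}\sanshu j^*\sanshu^2
=\sanshu r^{*}\sanshu^2+\frac{\varepsilon^2}{1-\varepsilon^2}\sanshu j^n\sanshu^2
+\sanshu r^{n+1}-r^*\sanshu^2-\frac{\varepsilon^2}{1-\varepsilon^2}\sanshu j^*-j^n\sanshu^2-\frac{2\lambda\Delta t}{1-\varepsilon^2}\sanshu j^*\sanshu^2.
\end{equation*}
Discarding the manifestly nonpositive term $-\tfrac{\varepsilon^2}{1-\varepsilon^2}\sanshu j^*-j^n\sanshu^2$, the claimed inequality \eqref{eq:thm2} reduces to showing that the numerical-dissipation term $\sanshu r^{n+1}-r^*\sanshu^2$ is absorbed by the relaxation term $\tfrac{2\lambda\Delta t}{1-\varepsilon^2}\sanshu j^*\sanshu^2$.

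The real obstacle is this final absorption estimate, which is exactly where the CFL condition \eqref{CFL} enters. To bound $\sanshu r^{n+1}-r^*\sanshu^2$ I would test \eqref{eq:simple_4_trans_r} once more, now with $\xi=r^{n+1}-r^*$, obtaining $\tfrac1{\Delta t}\|r^{n+1}-r^*\|^2=-v\mathcal{L}^-(j^*,r^{n+1}-r^*)$ at each $v$, and estimate the right-hand side by Cauchy--Schwarz together with the volume and trace inverse inequalities of Lemma \ref{lemma:inverse}. This turns the interior term $(j^*,(r^{n+1}-r^*)_x)$ and the interface terms into multiples of $h^{-1}\|j^*\|\,\|r^{n+1}-r^*\|$, giving a per-velocity bound of the form $\|r^{n+1}-r^*\|^2\le 2(C_i+4C_t^2)\tfrac{\Delta t^2 v^2}{h^2}\|j^*\|^2$. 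The delicate point is the velocity weight: integrating against $\lambda/M$ produces $\langle v^2\|j^*\|^2\rangle$ rather than $\sanshu j^*\sanshu^2$, and I would control it through the discrete-velocity bound $v_k^2\le 2N_v+1$ on the quadrature nodes, yielding $\sanshu r^{n+1}-r^*\sanshu^2\le 2(C_i+4C_t^2)(2N_v+1)\tfrac{\Delta t^2}{h^2}\sanshu j^*\sanshu^2$. Imposing \eqref{CFL} then bounds this by $\tfrac{2\lambda\Delta t}{1-\varepsilon^2}\sanshu j^*\sanshu^2$, closing the argument. I would watch the constant bookkeeping carefully: the factor $2$ from $(\sqrt{C_i}+2C_t)^2\le 2(C_i+4C_t^2)$ is precisely what pairs with the $2\lambda$ in the dissipation coefficient to reproduce the stated CFL, and the sign and weight of the coupling term must be tracked exactly, since any error there would destroy the cancellation in the second paragraph.
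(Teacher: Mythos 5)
Your argument is correct and lands on exactly the paper's energy identity and CFL constant, but it organizes the proof differently in two places. First, you test the $r$-transport equation with $\xi=r^*$ (and later again with $\xi=r^{n+1}-r^*$), so the coupling term $v\mathcal{L}^+(r^*,j^*)$ cancels exactly and the anti-dissipative term $+\sanshu r^{n+1}-r^*\sanshu^2$ is then absorbed through the explicit bound $\|r^{n+1}-r^*\|\le \Delta t\,|v|(\sqrt{C_i}+2C_t)h^{-1}\|j^*\|$; the paper instead tests with $\xi=r^{n+1}$, which leaves the residual $\left<v\mathcal{L}^+(r^{n+1}-r^*,j^*)\right>$ and disposes of it by Young's inequality with parameters $\theta_1=\theta_2=\frac{1}{4\Delta t}$. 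These are algebraically equivalent --- indeed $v\mathcal{L}^+(r^{n+1}-r^*,j^*)=\frac{1}{\Delta t}\|r^{n+1}-r^*\|^2$ by the scheme itself --- and both routes produce the same factor $(C_i+4C_t^2)(2N_v+1)$; your version is arguably more transparent about where the dissipation $\frac{2\lambda\Delta t}{1-\varepsilon^2}\sanshu j^*\sanshu^2$ is spent. The one substantive deviation is the velocity-weight step: you invoke a bound $v_k^2\le 2N_v+1$ on the Gauss--Hermite nodes, which does not hold for the probabilists' weight $e^{-v^2/2}$ used here (the largest node of the $(N_v+1)$-point rule only satisfies $v_{\max}^2<4N_v+6$, so your constant would degrade by roughly a factor of two), whereas the paper obtains $\sanshu vj\sanshu^2\le(2N_v+1)\sanshu j\sanshu^2$ from the moment recursion $\int_{\mathbb R}Mv^{\ell+2}\,dv=(\ell+1)\int_{\mathbb R}Mv^{\ell}\,dv$ applied to the degree-$2N_v$ polynomial $\psi^2$. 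You should replace your node estimate by that moment argument (or accept a slightly larger constant in the CFL condition); everything else goes through as you describe.
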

\begin{proof}
  By taking $\eta=\frac{\varepsilon^2}{1-\varepsilon^2}j^*$ in \eqref{eq:simple_4_relax_j} and $\xi=r^{n+1}$ in \eqref{eq:simple_4_trans_r}, 
  we have
  \begin{subequations}\label{eq:simple_4_1}
\begin{align}
    \frac{\varepsilon^2}{\Delta t(1-\varepsilon^2)}\left(j^*-j^n,j^*\right)&=-\frac{\lambda}{1-\varepsilon^2}(j^*,j^*)
    - v \mathcal{L}^+(r^*,j^*),\label{eq:simple_4_relax_j_1}\\
    \frac{1}{\Delta t}\left(r^{n+1}-r^*,r^{n+1}\right) &+ v \mathcal{L}^-(j^*,r^{n+1}) = 0.\label{eq:simple_4_trans_r_1}
\end{align}
\end{subequations}
By adding \eqref{eq:simple_4_relax_j_1} and \eqref{eq:simple_4_trans_r_1} together, combining with the properties of $\mathcal{L}^\pm$ in Lemma \ref{lemma:lproperty} and  the fact
  \begin{align*}
     \left(j^*-j^n,j^*\right)&=\frac12 \left(\|j^*\|^2-\|j^n\|^2+\|j^*-j^n\|^2\right),\\
      \left(r^{n+1}-r^*,r^{n+1}\right) &= \frac{1}{2}\left(\|r^{n+1}\|^2-\|r^*\|^2+\|r^{n+1}-r^*\|^2\right),
  \end{align*}
and applying the operator $\left<\cdot\right>$ defined in \eqref{eq:weightnorm}, 
 we obtain
    \begin{equation}\label{eq:rn+1j}
        \begin{aligned}
            &\frac{1}{2\Delta t}\left(\sanshu r^{n+1}\sanshu^2+\frac{\varepsilon^2}{1-\varepsilon^2}\sanshu j^*\sanshu^2-\sanshu r^{*}\sanshu^2-\frac{\varepsilon^2}{1-\varepsilon^2}\sanshu j^n\sanshu^2\right)\\
            &=-\frac{1}{2\Delta t}\sanshu r^{n+1}-r^*\sanshu^2-\frac{\varepsilon^2}{2\Delta t(1-\varepsilon^2)}\sanshu j^*-j^n\sanshu^2\\
            &-\frac{\lambda}{1-\varepsilon^2}\sanshu j^*\sanshu^2
            +\left<v\mathcal{L}^+(r^{n+1}-r^*,j^*)\right>.
        \end{aligned}
    \end{equation}
    
    Now we  estimate the term $\left<v \mathcal{L}^+(r^{n+1}-r^*,j^*)\right>$, which, according to the definition \eqref{eq:discrete_operator}, can be rewritten as
    \begin{equation}\label{eq:estimate_vah}
        \begin{aligned}
         & |\left<v \mathcal{L}^+(r^{n+1}-r^*,j^*)\right>|=  |\left<v \mathcal{L}^+(\Delta r,j^*)\right>|\\&\leq \sum_i\int_{I_i}\left<\left|v \Delta r\partial_x j^*\right|\right>dx+
                \sum_i\left<\left|v\Delta r_{i-\frac12}^+[j^*]_{i-\frac12}\right|\right>\triangleq\Lambda_1+\Lambda_2,
        \end{aligned}
    \end{equation}
    where $\Delta r=r^{n+1}-r^*$ and $\Delta r_{i-\frac12}^+=r^{n+1,+}_{i-\frac12}-r^{*,+}_{i-\frac12}$. 
 
For the term $\Lambda_1$, a simple use of Young's inequality, together with Lemma \ref{lemma:inverse} and the definition of the norm in \eqref{eq:phasenorm}, yields
    \begin{equation}
    \label{eq:lambda1}
        \begin{aligned}
            \Lambda_1&\leq \theta_1\sum_i\int_{I_i}\left< \Delta r^2dx\right>+\frac{1}{4\theta_1}\sum_i\int_{I_i}\left<(v\partial_x j)^2\right>dx\\
            &\leq \theta_1 \sanshu \Delta r\sanshu^2+\frac{C_i}{4\theta_1h^2}\sanshu vj\sanshu^2,
        \end{aligned}
    \end{equation}
    where $\theta_1$ is an arbitrary positive constant. Note  $j=M\psi$ for the velocity discretization with $\psi$ defined in \eqref{Psi}. Clearly $\psi$ is an $N_v$-th polynomial in $v$. Thus $\psi^2$ is a $2N_v$-th polynomial in $v $, and can be rewritten as 
    \begin{equation*}
        \psi(v)^2=\sum_{\ell=0}^{2N_v}\tilde\psi_\ell v^\ell,
    \end{equation*}
   where the coefficients $\psi_\ell$ are linear combinations of the coefficients of the Hermite expansion \eqref{Psi}.  Then the term $\sanshu vj\sanshu $ can be estimated as
\begin{align}
\label{eq:vj}
\begin{split}
\sanshu vj\sanshu^2
&=\int_{\Omega_x}\sum_{\ell=0}^{2N_v}\tilde\psi_\ell\int_\mathbb{R} \lambda M v^{\ell+2}\,dv dx
=\int_{\Omega_x}\sum_{\ell=0}^{2N_v}(\ell+1)\tilde\psi_\ell\int_\mathbb{R} \lambda M v^{\ell}\,dv dx
\\
&\leq\int_{\Omega_x}(2N_v+1) \sum_{\ell=0}^{2N_v} \tilde\psi_\ell\int_\mathbb{R} \lambda M v^{\ell}\,dvdx
=(2N_v+1) \sanshu j\sanshu^2,
\end{split}
\end{align}
where we have used  the following equality, 
    \begin{equation*}
        \int_\mathbb{R} M v^{\ell+2}\,dv=(\ell+1)\int_\mathbb{R} M v^\ell\,dv, \quad \ell\geq0.
    \end{equation*}
By substituting \eqref{eq:vj} into \eqref{eq:lambda1}, we obtain
\begin{align}
    \label{eq:lambda1_approx}
       \Lambda_1\leq \theta_1 \sanshu \Delta r\sanshu^2+\frac{C_i(2N_v+1)}{4\theta_1h^2} \sanshu j\sanshu^2.
\end{align}

 For the term  $\Lambda_2$, similar estimate can be performed to obtain
    \begin{equation}\label{eq:lambda_2_approx}
        \begin{aligned}
            \Lambda_2&\leq \frac{\theta_2 h}{C_t}\sum_i\left<(\Delta r_{i-1/2}^+)^2\right>+\frac{C_t}{4\theta_2 h}\sum_i\left<(v[j]_{i-\frac12})^2\right>\\
            &\leq \frac{\theta_2 h}{C_t} \sum_i\left<(\Delta r_{i-1/2}^+)^2\right>+\frac{C_t}{2\theta_2 h}\sum_{i}\left(\left<(vj^{*,+}_{i-\frac12})^2+(vj^{*,-}_{i-\frac12})^2\right>\right)\\
            &\leq \theta_2\sanshu \Delta r\sanshu^2+\frac{C_t^2}{\theta_2 h^2}\sanshu vj\sanshu^2\\
            &\leq \theta_2\sanshu \Delta r\sanshu^2+\frac{C_t^2(2N_v+1)}{\theta_2 h^2}\sanshu vj\sanshu^2,
        \end{aligned}
    \end{equation}
    where $\theta_2$ is an arbitrary positive constant. 

  
It follows from substituting \eqref{eq:lambda1_approx} and \eqref{eq:lambda_2_approx} into  \eqref{eq:rn+1j} that
    \begin{equation}
        \begin{aligned}
            &\frac{1}{2\Delta t}\left(\sanshu r^{n+1}\sanshu^2+\frac{\varepsilon^2}{1-\varepsilon^2}\sanshu j^*\sanshu^2-\sanshu r^{*}\sanshu^2-\frac{\varepsilon^2}{1-\varepsilon^2}\sanshu j^n\sanshu^2\right)\\
            &\leq\left(\theta_1+\theta_2-\frac{1}{2\Delta t}\right)\sanshu r^{n+1}-r^*\sanshu^2 -\frac{\varepsilon^2}{2\Delta t(1-\varepsilon^2)}\sanshu j^*-j^n\sanshu^2 \\
            &+\left(\frac{C_i(2N_v+1)}{4\theta_1h^2}+\frac{C_t^2(2N_v+1)}{\theta_2h^2}-\frac{\lambda}{1-\varepsilon^2}\right)\sanshu j^*\sanshu^2,
        \end{aligned}
    \end{equation}
 which, by choosing $\theta_1=\theta_2=\frac{1}{4\Delta t}$ and dropping the positive term  $\frac{\varepsilon^2}{2\Delta t(1-\varepsilon^2)}\sanshu j^*-j^n\sanshu^2$, yileds
    \begin{equation*}
        \frac{1}{2\Delta t}\left(\sanshu r^{n+1}\sanshu^2+\frac{\varepsilon^2}{1-\varepsilon^2}\sanshu j^*\sanshu^2-\sanshu r^{*}\sanshu^2-\frac{\varepsilon^2}{1-\varepsilon^2}\sanshu j^n\sanshu^2\right)\leq
        \gamma \sanshu j^*\sanshu^2,
    \end{equation*}
    where the coefficient $\gamma$ is given by
    \begin{equation*}
        \gamma = \frac{\Delta t}{h^2}\left(C_i+4C_t^2\right)(2N_v+1)-\frac{\lambda}{1-\varepsilon^2}.
    \end{equation*}
Then the proof is complete under the CFL condition
    \begin{equation*}
        \frac{\Delta t}{h^2}\leq \frac{\lambda}{(1-\varepsilon^2)(C_i+4C_t^2)(2N_v+1)}.
    \end{equation*}

\end{proof}

\begin{lemma}
Under the same assumption as in Theorem \ref{Thm:main},
the numerical solutions $r^*$ ,$j^*$, $r^{n+1}$, $j^{n+1}$ 
of the APDG scheme with the third-order SSPRK temporal method is used in the transport step satisfy
    \begin{equation}\label{eq:thm3}
        \sanshu r^{n+1}\sanshu^2+\sanshu j^{n+1}\sanshu^2\leq
        \sanshu r^*\sanshu^2+\sanshu j^*\sanshu^2,
    \end{equation}
    under the 
    CFL condition
    \begin{equation*}
        \frac{\Delta t}{h} \leq C_0,
    \end{equation*}
    where $C_0$ is a positive constant independent of $\Delta t$ and $h$.
\end{lemma}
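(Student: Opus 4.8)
The plan is to use the linearity of the transport step. With $E=0$ and $\phi=1$, the DG discretization \eqref{eq:simple_4_trans_r}--\eqref{eq:simple_4_trans_j} of the transport system has the form $\partial_t U=\mathcal{A}U$, where $U=(r,j)$ and the linear operator $\mathcal{A}$ is defined weakly through the operators \eqref{eq:discrete_operator} by $(\mathcal{A}U,\Phi)_{\ast}=-\langle v(\mathcal{L}^-(j,\varphi)+\mathcal{L}^+(r,\psi))\rangle$ for test pairs $\Phi=(\varphi,\psi)$, with $(\cdot,\cdot)_{\ast}$ the phase-space inner product associated with $\sanshu\cdot\sanshu$ in \eqref{eq:phasenorm}. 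The first step is to record that $\mathcal{A}$ is skew-symmetric in this inner product: taking $\Phi=U$ and applying the pairing identity $\mathcal{L}^-(j,r)+\mathcal{L}^+(r,j)=0$ of Lemma \ref{lemma:lproperty} gives $(\mathcal{A}U,U)_{\ast}=0$, hence $\mathcal{A}^{\ast}=-\mathcal{A}$.

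Because $\mathcal{A}$ is linear, the third-order SSPRK method applied to $\partial_t U=\mathcal{A}U$ is exactly $U^{n+1}=p(\Delta t\,\mathcal{A})U^{\ast}$ with the amplification polynomial $p(z)=1+z+\tfrac{z^2}{2}+\tfrac{z^3}{6}$. The second step is an energy identity: using $\mathcal{A}^{\ast}=-\mathcal{A}$ we have $\sanshu U^{n+1}\sanshu^2=(U^{\ast},\,p(-\Delta t\,\mathcal{A})\,p(\Delta t\,\mathcal{A})\,U^{\ast})_{\ast}$, and the elementary factorization $p(-z)p(z)=1-\tfrac{z^4}{12}-\tfrac{z^6}{36}$, together with the identities $(U^{\ast},\mathcal{A}^4U^{\ast})_{\ast}=\sanshu\mathcal{A}^2U^{\ast}\sanshu^2$ and $-(U^{\ast},\mathcal{A}^6U^{\ast})_{\ast}=\sanshu\mathcal{A}^3U^{\ast}\sanshu^2$ (both consequences of $\mathcal{A}^{\ast}=-\mathcal{A}$), yields $\sanshu U^{n+1}\sanshu^2=\sanshu U^{\ast}\sanshu^2-\tfrac{\Delta t^4}{12}\sanshu\mathcal{A}^2U^{\ast}\sanshu^2+\tfrac{\Delta t^6}{36}\sanshu\mathcal{A}^3U^{\ast}\sanshu^2$.

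The third step is to dominate the positive sixth-order term by the negative fourth-order one under the CFL condition. From $\sanshu\mathcal{A}^3U^{\ast}\sanshu=\sanshu\mathcal{A}(\mathcal{A}^2U^{\ast})\sanshu$ it suffices to bound $\mathcal{A}$ in the operator norm induced by $\sanshu\cdot\sanshu$. Since $\mathcal{A}$ is a first-order DG spatial operator weighted by $v$, the inverse inequalities of Lemma \ref{lemma:inverse} (absorbing $\partial_x$ and the interface jumps by a factor $h^{-1}$) combined with the velocity-weight bound $\sanshu vj\sanshu^2\leq(2N_v+1)\sanshu j\sanshu^2$ from \eqref{eq:vj} will give a mesh-independent constant $C$ with $\sanshu\mathcal{A}W\sanshu\leq C\,h^{-1}\sqrt{2N_v+1}\,\sanshu W\sanshu$. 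Then $\tfrac{\Delta t^6}{36}\sanshu\mathcal{A}^3U^{\ast}\sanshu^2\leq\tfrac{\Delta t^4}{12}\sanshu\mathcal{A}^2U^{\ast}\sanshu^2$ whenever $\tfrac{\Delta t^2}{3}C^2h^{-2}(2N_v+1)\leq1$, i.e. $\Delta t\leq C_0h$ with $C_0=\sqrt{3}/(C\sqrt{2N_v+1})$, and the claimed inequality follows.

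I expect the main difficulty to be two-fold. First, the cancellation in Lemma \ref{lemma:lproperty} closes only when the two transport equations are discretized with complementary alternating fluxes, so that one carries $\mathcal{L}^-$ and the other $\mathcal{L}^+$; otherwise $(\mathcal{A}U,U)_{\ast}$ retains the indefinite interface contribution $\langle v\sum_i[j]_{i-1/2}[r]_{i-1/2}\rangle$ and no contraction is available. Second, the genuine three-stage structure appears essential rather than a reduction to a single forward-Euler substep: for skew-symmetric $\mathcal{A}$ one explicit Euler step gives $\sanshu U^{n+1}\sanshu^2=\sanshu U^{\ast}\sanshu^2+\sanshu U^{n+1}-U^{\ast}\sanshu^2\geq\sanshu U^{\ast}\sanshu^2$, so the contraction is produced entirely by the fourth-order dissipative term in the amplification polynomial, and the whole argument rests on the sharp operator-norm estimate that lets the CFL condition control the competing sixth-order term.
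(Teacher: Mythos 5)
The paper does not actually write out a proof of this lemma: it appeals to the strong-stability result of Zhang--Shu for the third-order SSPRK--DG method applied to linear conservation laws with \emph{upwind} fluxes, where the spatial operator is skew-symmetric plus a negative-semidefinite jump dissipation. Your argument is a genuinely different and more self-contained route: you treat the spatial operator as exactly skew-symmetric, so that the amplification factor obeys $p(-z)p(z)=1-z^4/12-z^6/36$ and the contraction comes entirely from the $z^4$ term, with the $z^6$ term absorbed via an operator-norm bound $\sanshu\mathcal{A}W\sanshu\le Ch^{-1}\sanshu W\sanshu$ under $\Delta t\le C_0h$. This computation is correct, and your observation that a single forward-Euler substep is energy-increasing for skew-symmetric $\mathcal{A}$ (so the three-stage structure is essential and the paper's ``convex combination of forward Euler steps'' heuristic cannot yield this lemma) is exactly right. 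Two small points: the identity you need from Lemma \ref{lemma:lproperty} is $\mathcal{L}^+(r,j)+\mathcal{L}^-(j,r)=0$, which holds only under periodic boundary conditions (assumed in Theorem \ref{Thm:main}, so this is fine); and when you iterate $\mathcal{A}$ to bound $\sanshu\mathcal{A}^3U^*\sanshu$, each application multiplies by $v$, so the weight bound \eqref{eq:vj} picks up factors $(2N_v+3),(2N_v+5),\dots$ rather than $(2N_v+1)$ repeatedly --- harmless at the discrete velocity level, where $|v_\ell|$ is simply bounded, but worth stating.

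The substantive discrepancy is the one you flag yourself. Your skew-symmetry step requires the two transport equations to carry \emph{complementary} traces, i.e.\ $\tilde j=j^-$ in the $r$-equation and $\tilde r=r^+$ in the $j$-equation, whereas the scheme as written uses $\tilde r=r^-$ in \eqref{eq:simple_4_trans_j}. With the paper's choice one computes from \eqref{eq:discrete_operator} that $\mathcal{L}^-(j,r)+\mathcal{L}^-(r,j)=\sum_i[j]_{i-1/2}[r]_{i-1/2}$, so $(\mathcal{A}U,U)_{\ast}=-\langle v\sum_i[j][r]\rangle$ survives; in the characteristic variables $w^{\pm}=r\pm j$ this equals $-\langle\tfrac{v}{4}\sum_i([w^+]^2-[w^-]^2)\rangle$, i.e.\ one wave family is upwinded and the other downwinded for each fixed $v$. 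For that flux choice the semi-discrete operator is neither skew-symmetric nor semi-bounded uniformly in $h$, so neither your argument nor the cited Zhang--Shu result applies as stated. This is a defect in the scheme's flux specification (or in the citation) rather than in your proposal: with the alternating fluxes your proof is complete, and with characteristic upwinding the Zhang--Shu route applies; with the literal fluxes of \eqref{eq:simple_4_trans_j} the lemma is not established by either argument. You have correctly isolated the point on which the whole lemma turns.
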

In fact, the transport step with $E=0$ is a system of linear conservations laws. Therefore, the proof of this lemma is similar to the proof of the strong stability for the DG method with the third-order SSPRK method solving linear conservation laws in  \cite{zhang_shu_stability}, and  is omitted here. 

\begin{proof}[Proof of Main Theorem \ref{Thm:main}]
  By multiplying both sides of Equation \eqref{eq:thm2} by $(1-\varepsilon^2)$ and Equation \eqref{eq:thm3} by $\varepsilon^2$, and adding them together, we have
    \begin{equation}
        \sanshu r^{n+1}\sanshu^2+\varepsilon^2\sanshu j^{n+1}\sanshu^2\leq
        \sanshu r^{*}\sanshu^2+\varepsilon^2\sanshu j^n\sanshu^2,
    \end{equation}
    which, combining with Lemma \ref{lemma:r*}, completes the proof.
    
\end{proof}

\subsection{Analysis of asymptotic-preserving property}
\label{sec:ap}
In this subsection, we  discuss the AP property of the proposed scheme.

It is straightforward to find that as $\varepsilon\to0$, $\tau\to1, \alpha\to0$, and $\beta\to\frac{1}{\lambda}$. Then the scheme \eqref{eq:simple_4} can be rewritten as follows:
\begin{subequations}
\label{eq:schemeap}
    \begin{align}
        r^*&=\rho^nM,\label{eq:ap_r*}\\
        (j^*,\eta)&=-\frac{v}{\lambda}\mathcal{L}^+(r^*,\eta),\label{eq:ap_j*}\\
        \left(\frac{r^{n+1}-r^*}{\Delta t},\xi\right)& + v\mathcal{L^-}(j^*,\xi) = 0,\label{eq:ap_rn+1}
    \end{align}
\end{subequations}
where we omit the scheme \eqref{eq:simple_4_trans_j} for updating from $j^*$ to $j^{n+1}$, since  $j^n$ is no longer needed for updating $j^*$ in \eqref{eq:ap_j*}, and consequently there is no need for updating $j^{n+1}$. 

We then introduce a new variable $g^*$ defined as
\begin{equation}\label{eq:ap_g*}
    j^*=-\frac{vM}{\lambda} g^*.
\end{equation}
By substituting \eqref{eq:ap_r*} and \eqref{eq:ap_g*} into \eqref{eq:ap_j*} and 
\eqref{eq:ap_rn+1}, and integrating with respect to $v$, we have
\begin{subequations}
\label{eq:LDG}
    \begin{align}
        (g^*,\eta)&=\mathcal{L}^+(\rho^n,\eta),\\
        \left(\frac{\rho^{n+1}-\rho^n}{\Delta t},\xi\right)&=D  \mathcal{L^-}(g^*,\xi),
    \end{align}
\end{subequations}
with the constant $D$  defined as
\begin{equation}
    D=\int_\mathbb{R}\frac{v^2M}{\lambda}\,dv.
\end{equation}

The limiting scheme \eqref{eq:LDG} is exactly the local DG (LDG) scheme with the forward Euler method for the following limiting diffusion equation
\begin{equation}
\partial_t \rho = D \partial_{xx} \rho,
\end{equation}
which demonstrates the AP property of the proposed scheme.
\section{Numerical examples}
\label{sec:4}
In this section, numerical examples are presented to demonstrate the performance of the proposed APDG scheme with the positive-preserving limiter. In particular, we will study its high-order accuracy in space and AP property,  and further consider a more complex problem with mixed regimes, where the Knudsen number $\varepsilon$ depends on the spatial variable. 


In all the examples, the scattering kernel is set as $\sigma=1$ rendering $\lambda=1$. Consequently, $\mu=2>\lambda$ is chosen in the relaxation step \eqref{eq:r_discrete}. 
The spatial domain is $\Omega_x=[0,1]$, and the number of velocity points is $N_v=15$. The APDG method adopts piecewise $\mathbb{P}^k$ polynomials with $k=2,3$ for Example \ref{eg:convergence}, and with $k=2$ for all other examples. 

\begin{exmp}[accuracy test]\label{eg:convergence}
    In this example, we consider the case with $E=0$ to test the accuracy of the APDG scheme in the spatial variable. The initial distribution is taken as $f(x,v,t=0)=M(v)(\cos(2\pi x)+1)$, and periodic boundary conditions in $x$ are applied. Both the diffusive regime with $\varepsilon=10^{-5}$ and  the kinetic regime with $\varepsilon=0.5$ are studied. 
\end{exmp}

When $\varepsilon\rightarrow 0$, the density $\rho$ computed from the semiconductor Boltzmann equation \eqref{eq:Boltzmann} converges to that of the limiting drift-diffusion equation. Specifically,  the exact solution of the drift-diffusion model with  $E=0$ is given by
\begin{equation*}
    \rho(x,t) = e^{-4{\pi^2}t}\cos(2\pi x)+1.
\end{equation*}
Thus, in the diffusive regime with $\varepsilon=10^{-5}$, we compute the errors between the exact solution $f_{exact}(x,v,t)={\rho}(x,t)M(v)$ and the numerical solution to test the accuracy.
In the kinetic regime with $\varepsilon=0.5$, where it is infeasible to get the exact solution, we  compute the error between the numerical solution with $N_x$ cells and with 2$N_x$ cells to test the order of accuracy


We simulate this example up to $t=0.03$ with a small time step size of $\Delta t=2\times10^{-6}$ to make the temporal error negligible compared to the spatial error.
Table \ref{tab:conv_test2}  lists  the $L^1$ and $L^\infty$ errors and orders of accuracy of $f$ for the APDG method using $\mathbb{P}^k$ polynomials with $k=2,3$. It can be observed that, in both the diffusive and kinetic regimes, the APDG method can achieve optimal $(k+1)$-th order of accuracy.
\begin{table}[!htbp]
    \centering
    \caption{Errors and orders of accuracy for the APDG method  in Example \ref{eg:convergence}.}
    \label{tab:conv_test2}
    \begin{tabular}{ccccccccc}
    \toprule
    &\multicolumn{4}{c}{$\varepsilon=10^{-5}$}&\multicolumn{4}{c}{$\varepsilon=0.5$}\\
    \cmidrule(lr){2-5}\cmidrule(lr){6-9}
    $N_x$ & $L^2$ error&order&$L^{\infty}$ error&order& $L^2$ error&order&$L^{\infty}$ error&order\\
    \midrule
    & $k=2$ & &&&&\\
    \cmidrule(lr){2-9}
    4&      4.14e-3&	-&	    1.62e-2&	-&      1.01e-1&	-&	    3.15e-1&	-\\
    8&	    5.12e-4&	3.02&	2.35e-3&	2.78&   1.01e-2&	3.32&	3.57e-2&	3.14\\
    16&	    6.39e-5&	3.00&	3.05e-4&	2.95&   1.22e-3&	3.05&	5.10e-3&	2.81\\
    32&	    7.99e-6&	2.99&	3.85e-5&	2.99&   1.52e-4&	3.01&	6.37e-4&	3.00\\
    64&     9.99e-7&	3.00&	4.82e-6&	3.00&   1.82e-5&	3.06&	7.71e-5&	3.05\\
   \midrule
    & $k=3$ & &&&&\\
    \cmidrule(lr){2-9}

    4&      3.90e-4&	-&	    1.69e-3&	-&      4.69e-3&	-&	    9.89e-3&	-\\
    8&      2.46e-5&	3.99&	1.31e-4&	3.69&   1.51e-3&	2.03&	4.70e-3&	1.07\\
    16&	    1.54e-6&	4.00&	8.52e-6&	3.94&   1.01e-4&	3.51&	4.76e-4&	3.30\\
    32&	    1.04e-7&	3.88&	5.97e-7&	3.84&   7.24e-6&	3.80&	3.38e-5&	3.82\\
    64&	    6.29e-9&	4.05&   3.46e-8&	4.11&   4.92e-7&	3.88&	2.23e-6&	3.92\\
    \bottomrule
    \end{tabular}
\end{table}

\begin{exmp}[Given electric potential]\label{eg:fixed_E}
In this example, we consider the electric field function given by
    \begin{equation*}
        E= -2c(1/4-x)\exp(-c(1/4-x)^2),
    \end{equation*}
with the constant $c=50\exp(1)$. The initial condition is $f(x,v,t=0)=M(v)$, and the boundary conditions are the incoming boundary conditions $F_L(v)=F_R(v)=M(v)$ for $v>0$. We set $\varepsilon=0.5$ and $\varepsilon=2\times10^{-3}$ to represent the kinetic and diffusive regimes, respectively.
\end{exmp}

We perform numerical simulations for the proposed APDG scheme  with the spatial and temporal steps taken as
$\Delta x=0.05$ and $\Delta t=10^{-5}$, respectively. We first plot the numerical solution of the density  $\rho$ at $t=0.5$ in Figure \ref{fig:t2_rho-x} with the reference solution obtained by the finite volume method \cite{jin_discretization_2000} with a fine mesh of $N_x=500$. It is evident that numerical results agree well with the reference solution.

In addition, we investigate the behavior of the numerical solution when $\epsilon\rightarrow 0$ by taking different values of $\varepsilon$ in the range of $[10^{-6}, 10^{-3}]$.
Figure \ref{fig:t2_diff} shows the errors between the numerical solution and the limiting drift-diffusion equation solution at $t=0.5$ with respect to $\varepsilon$.  We observe a first-order convergence rate in  $\varepsilon$, indicating that the proposed numerical scheme converges to the diffusive model in the order of $\varepsilon$. This phenomenon satisfies our expectations. 

For a more detailed visualization, we show the time evolution of the distribution function $f_h$ in both kinetic and diffusive regimes in Figure \ref{fig:t2_time_evolution}. It can be observed that, in the kinetic regime, the solution, as shown on the left side of Figure \ref{fig:t2_time_evolution}, varies fast and tends to the equilibrium state as time evolves from $t=0.05$ to $t=5$, while the distribution to the problem in the diffusive regime remains relatively steady, as seen on the right side of Figure \ref{fig:t2_time_evolution}. 

\begin{figure}[!htbp]
    \centering
    \subfigure[ Kinetic regime with $\varepsilon=0.5$]{\includegraphics[width=0.46\textwidth]{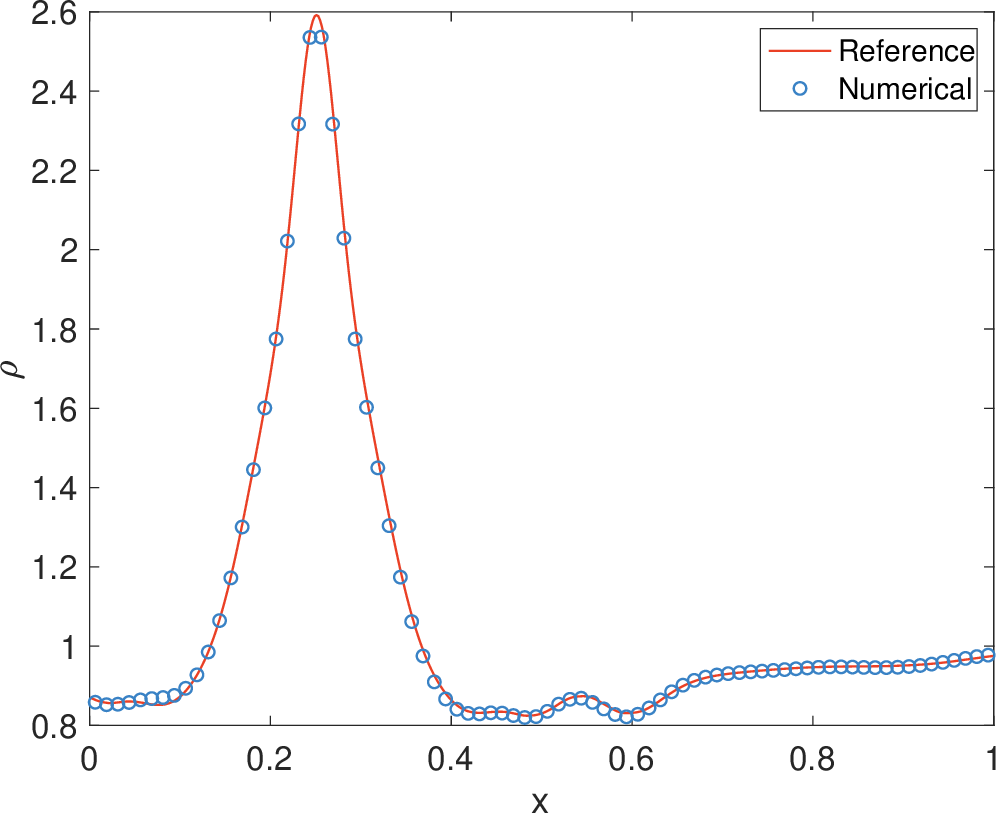}}\qquad
    \subfigure[ Diffusive regime with $\varepsilon=2\times10^{-3}$]{\includegraphics[width=0.445\textwidth]{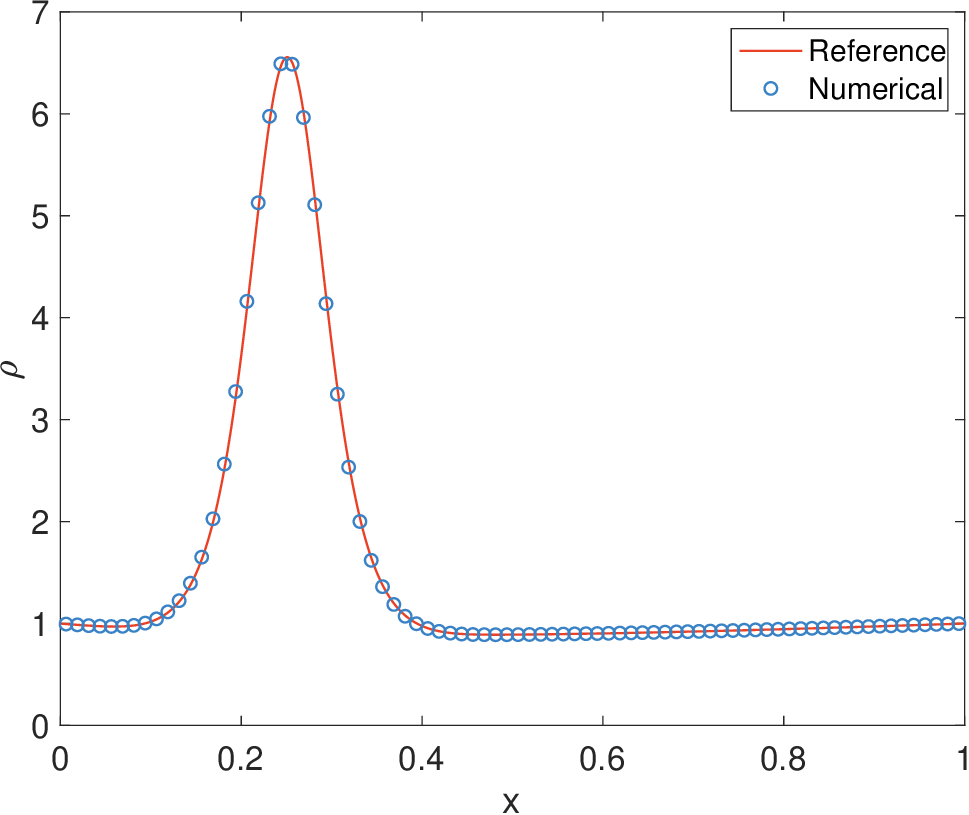}}
    \caption{Numerical solutions $\rho$ for the kinetic and diffusive regimes of Example \ref{eg:fixed_E}.}
    \label{fig:t2_rho-x}
\end{figure}

\begin{figure}[!htbp]
    \centering
    \includegraphics[width=0.5\textwidth]{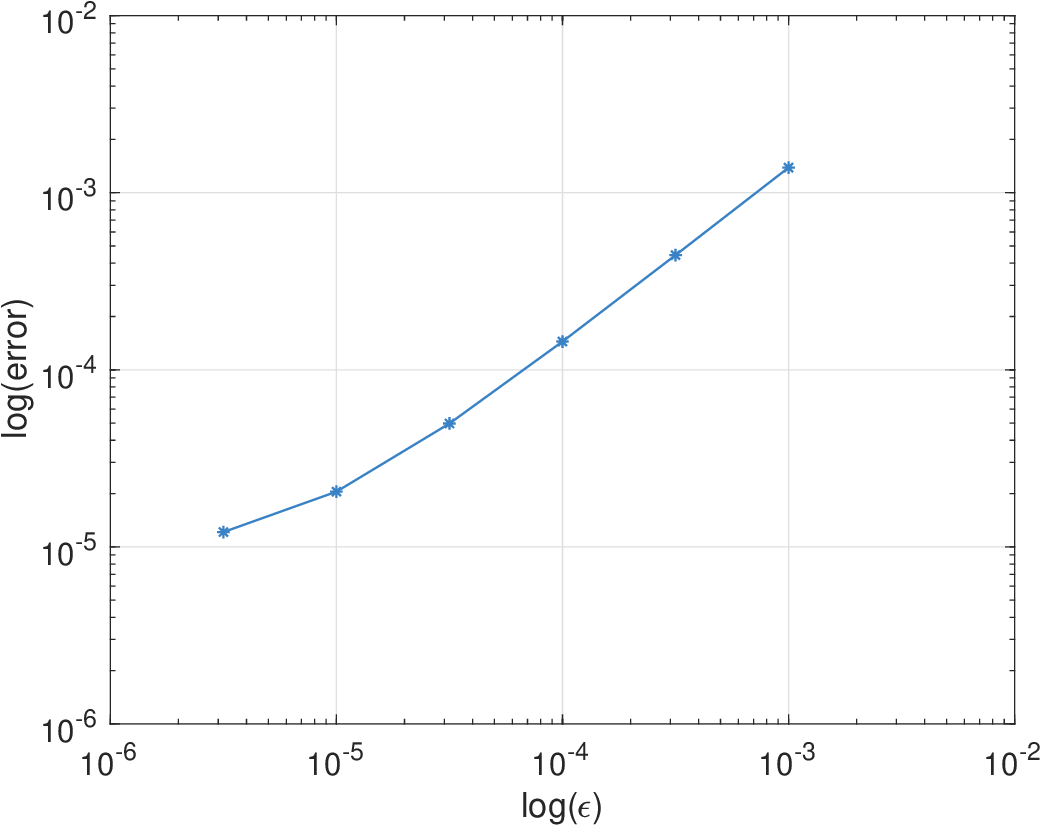}
    \caption{Convergence to the drift-diffusion system for Example \ref{eg:fixed_E}.}
    \label{fig:t2_diff}
\end{figure}

\begin{figure}[!htbp]
    \centering
    \subfigure[$\varepsilon=0.5, t=0.05$]{\includegraphics[width=0.48\textwidth]{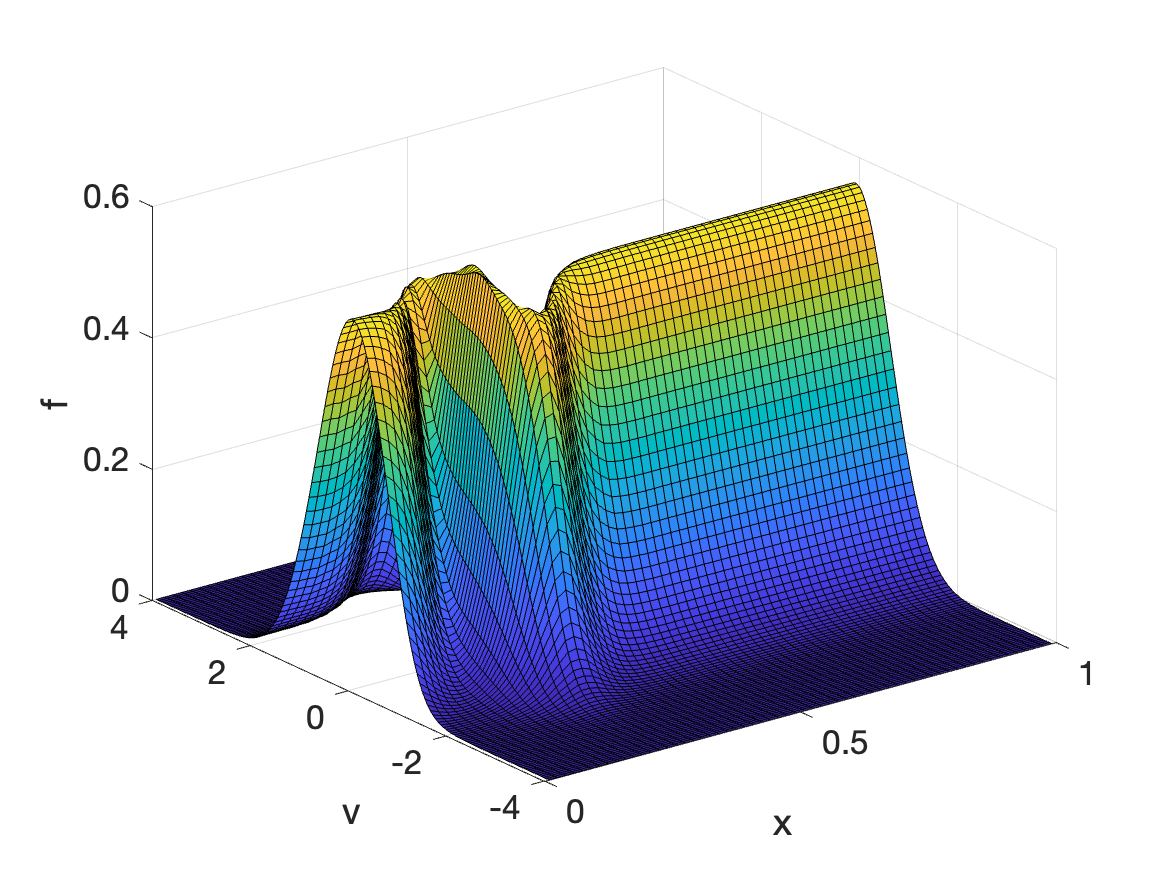}}
    \subfigure[$\varepsilon=2\times 10^{-3}, t=0.05$]{\includegraphics[width=0.48\textwidth]{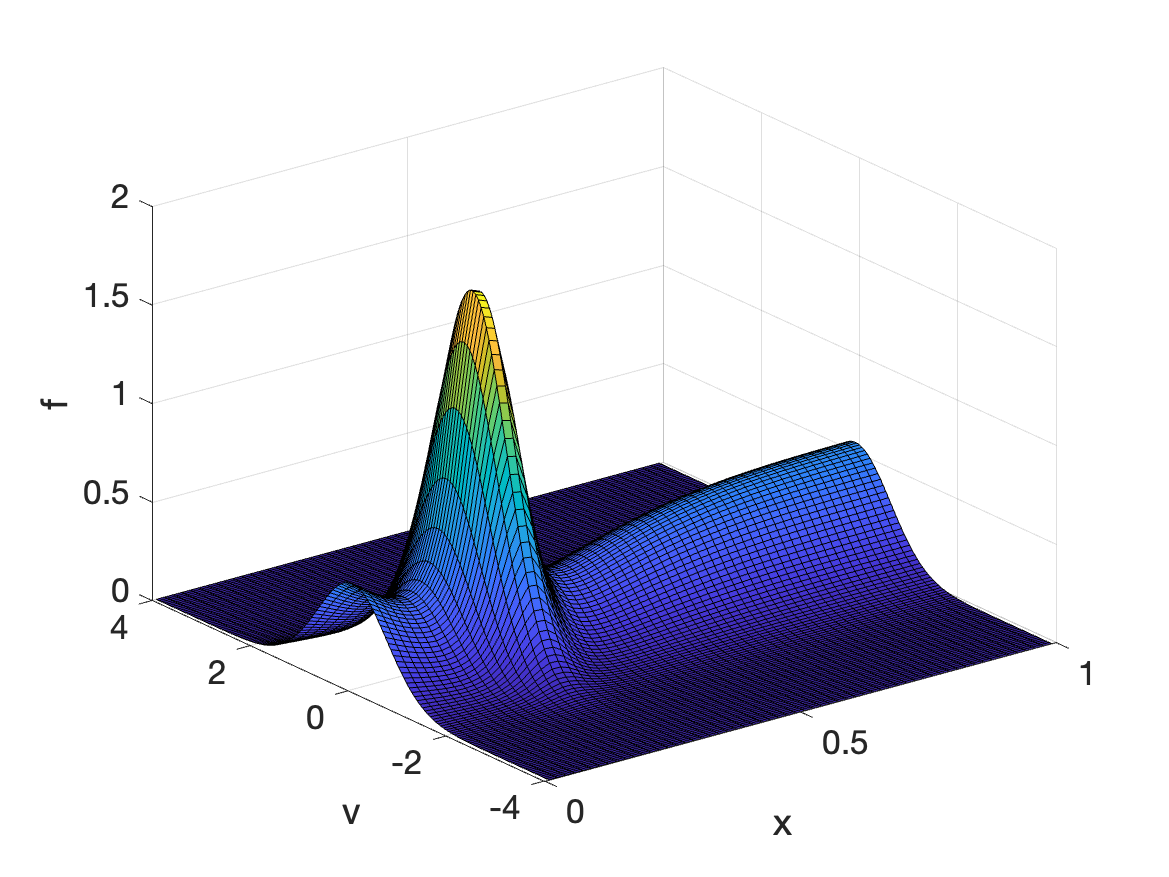}}
    \subfigure[$\varepsilon=0.5, t=0.5$]{\includegraphics[width=0.48\textwidth]{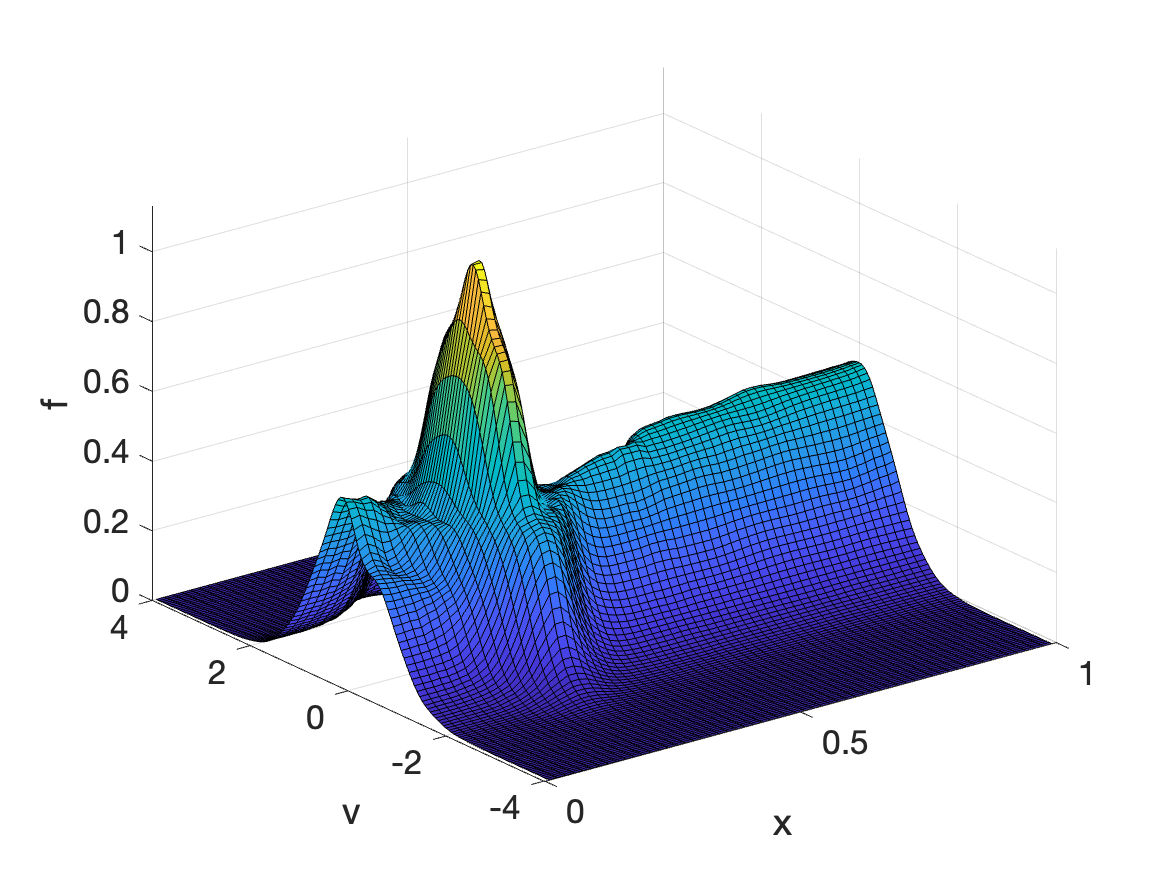}}
    \subfigure[$\varepsilon=2\times 10^{-3}, t=0.5$]{\includegraphics[width=0.48\textwidth]{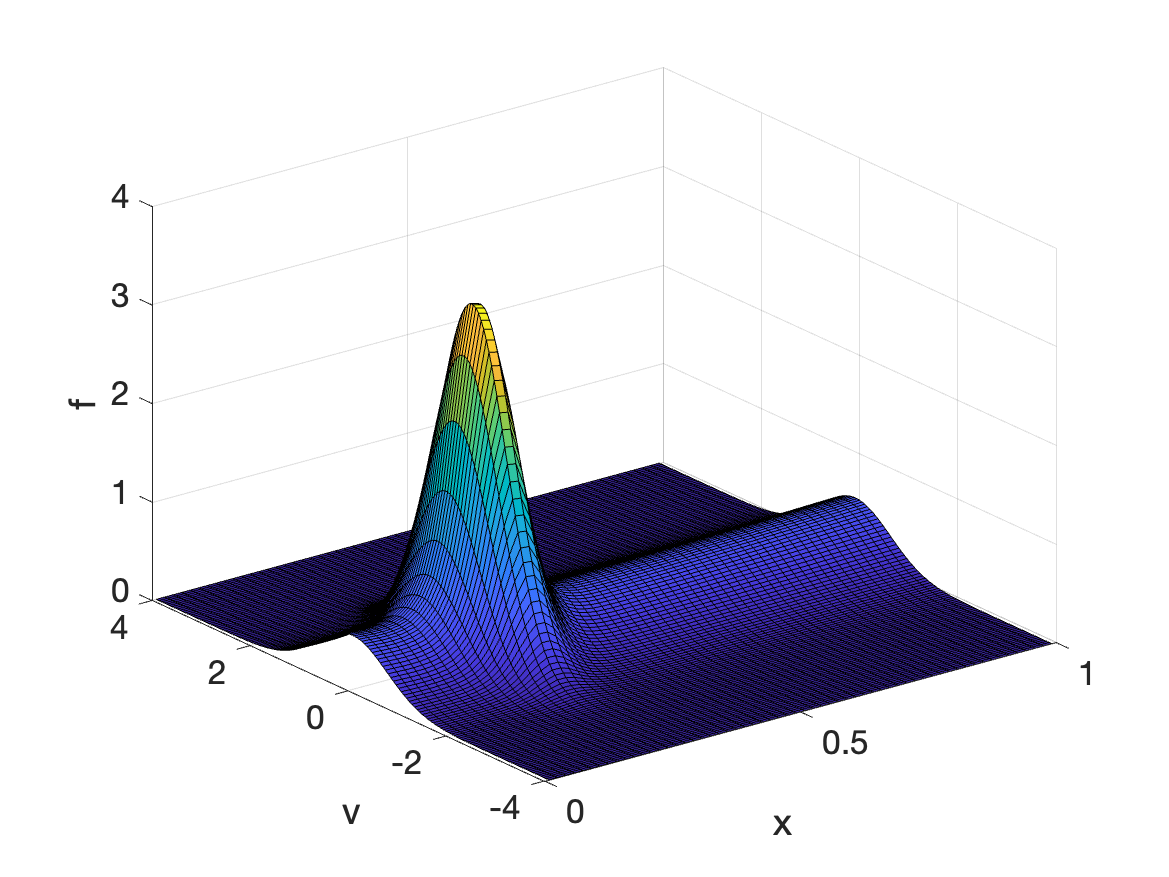}}
    \subfigure[$\varepsilon=0.5, t=5$]{\includegraphics[width=0.48\textwidth]{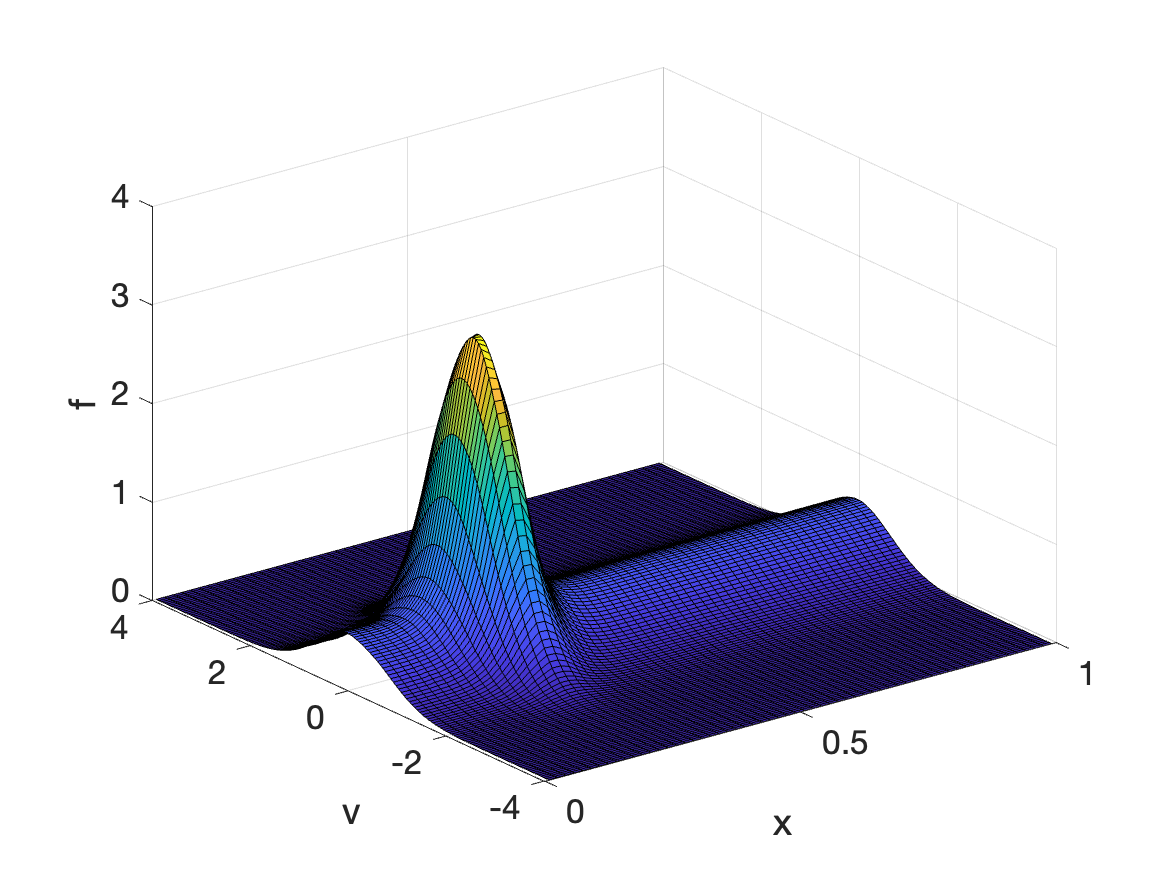}}
    \subfigure[$\varepsilon=2\times 10^{-3}, t=5$]{\includegraphics[width=0.48\textwidth]{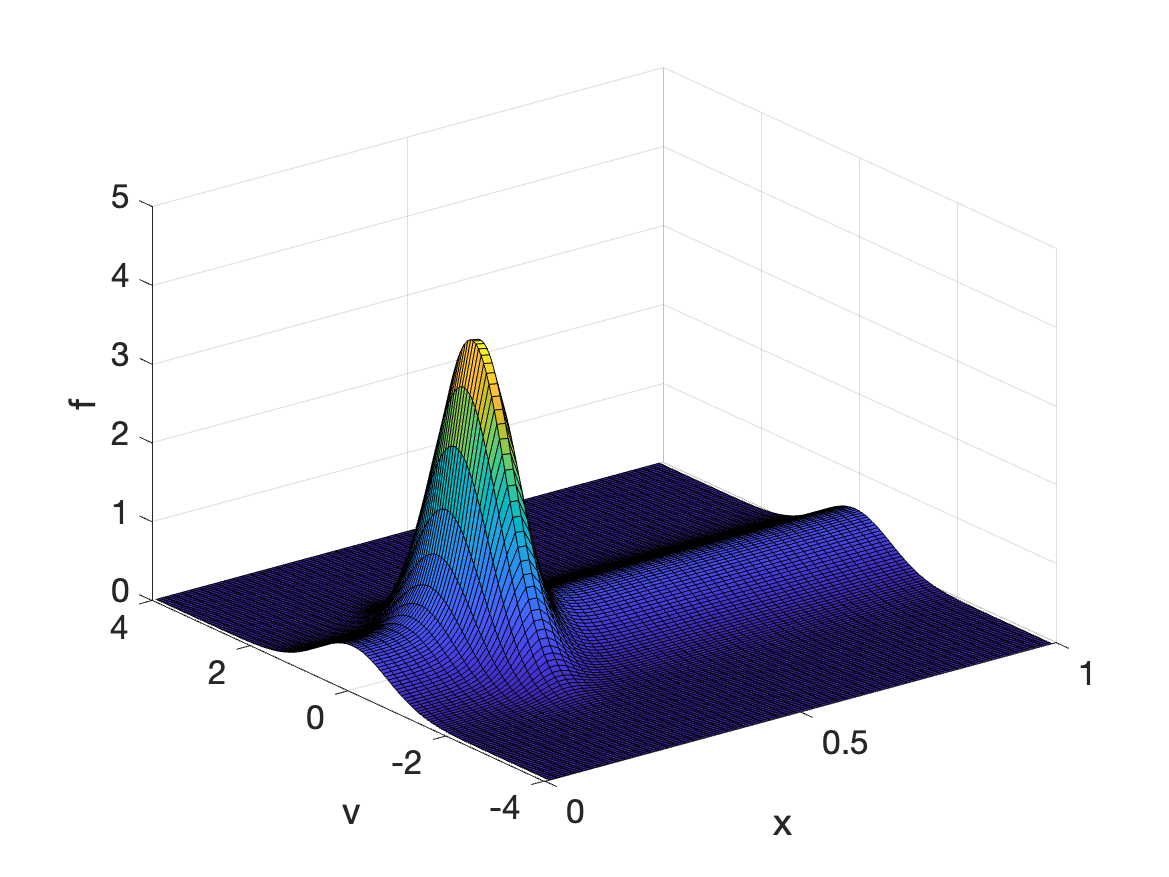}}
    \caption{Time evolution of numerical solutions $f_h$ for the kinetic (left) and diffusive (right) regimes of Example \ref{eg:fixed_E}, with $T=0.05$ (top), $T=0.5$ (middle), and $T=5$ (bottom).}
    \label{fig:t2_time_evolution}
\end{figure}

\begin{exmp}[Boltzmann-Poisson system]\label{eg:BPSys}
In this example, the electric potential and the electric field are given by the following Poisson equation
    \begin{gather*}
        \beta\Delta_x\Phi = \rho-c(x),\qquad E=-\nabla_x\Phi,\\
        \Phi(0) = 0,\qquad \Phi(1)=5,
    \end{gather*}
    where $\beta=0.002$ is the scaled Debye length and $c(x)$ is the doping profile written as
    \begin{equation*}
        c(x) = 1-(1-m)\left[\tanh\left(\frac{x-0.3}{s}\right)-\tanh\left(\frac{x-0.7}{s}\right)\right],
    \end{equation*}
    with $s=0.02, m=(1-0.001)/2$. The initial condition is $f(x,v,t=0)=M(v)$, with incoming boundary condition $F_L(v)=F_R(v)=M(v)$ for $v>0$. 
\end{exmp}

We simulate this example up to $t=0.05$ with $\Delta x=0.05$ and $\Delta t=2\times 10^{-5}$. 
We show in Figure \ref{fig:BPSys}(a) the density obtained from the numerical solution $f$ when $\varepsilon=10^{-3}$, comparing with that solved by the drift-diffusion equation. One can observe that the two solutions match well with each other, which indicates that our APDG scheme is able to accurately capture the diffusive behavior of the kinetic equation when $\varepsilon$ is small. Figure \ref{fig:BPSys}(b) presents the time evolution of $L^2$ errors between the distribution function $f$ and the corresponding local equilibrium state $f_{eq}=\int_{\mathbb R}f dv\, M(v)$. We can observe that, as time evolves, the distribution function converges to the local equilibrium state, as expected. 
We also show in Figure \ref{fig:BPSysEpotential} the electric field $E$ and the electric potential $\Phi$,  which match well with the counter parts of the drift-diffusion equation.

\begin{figure}[!htbp]
    \centering
    \subfigure[ $\rho$]{\includegraphics[width=0.42\textwidth]{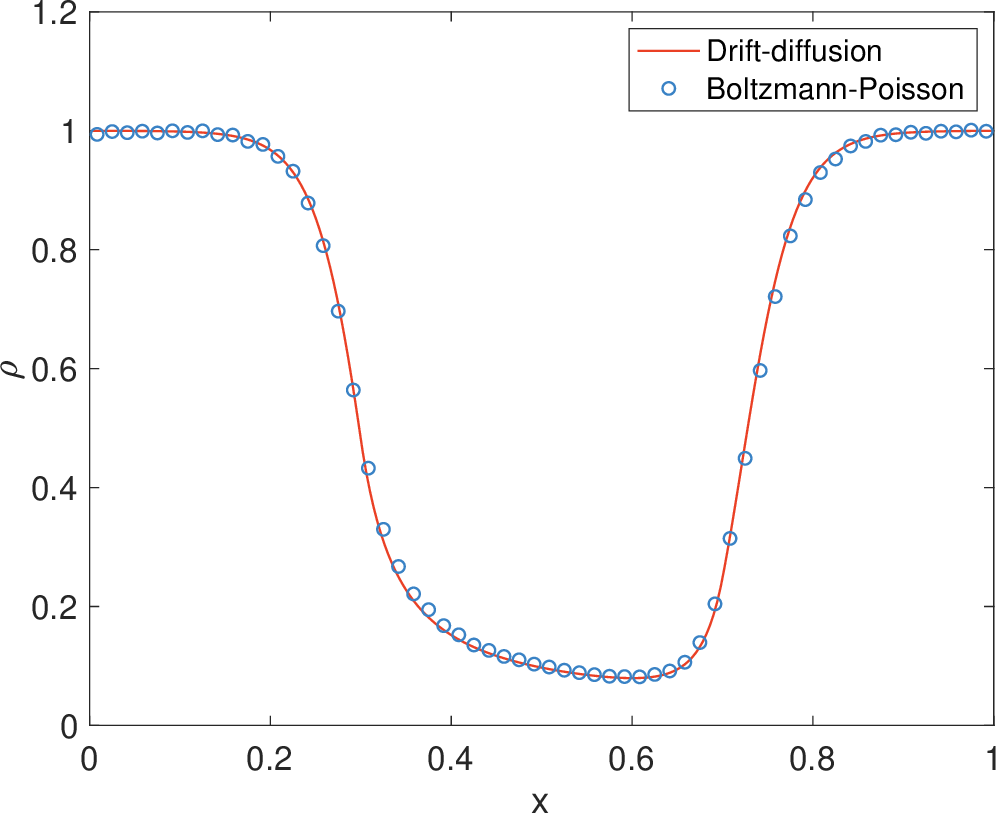}}\qquad\qquad
    \subfigure[Time evolution of $\sanshu f-f_{eq}\sanshu$]{\includegraphics[width=0.44\textwidth]{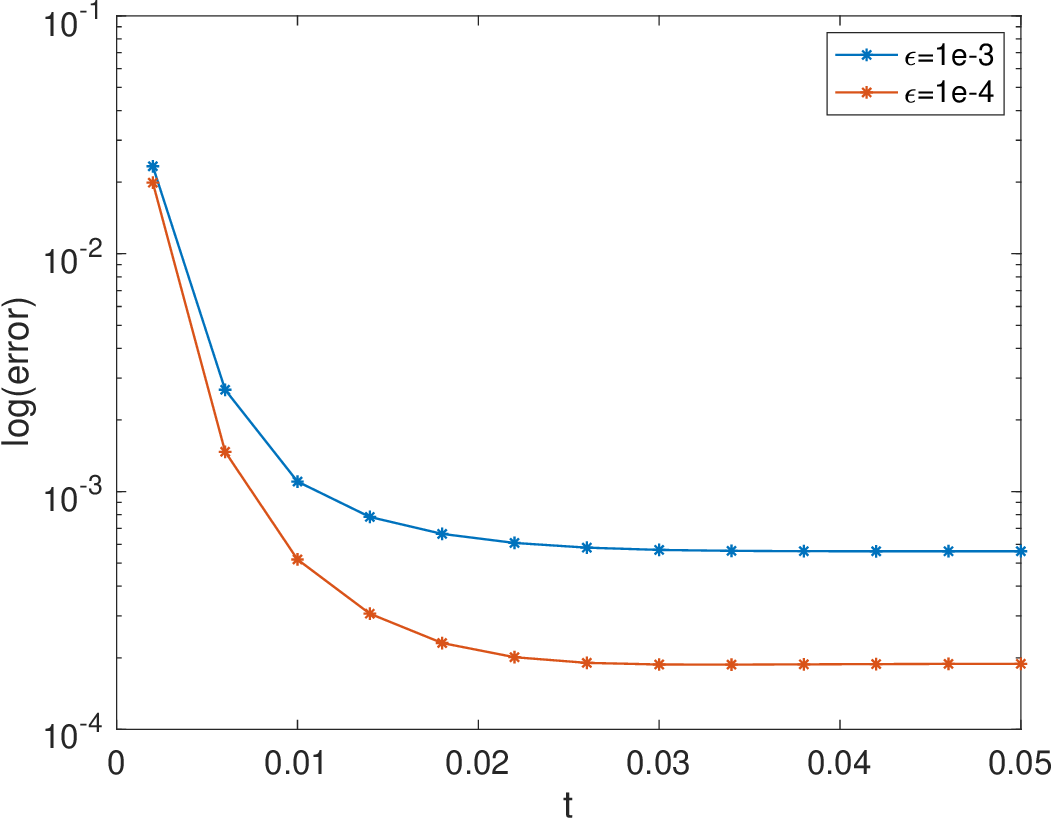}}
    \caption{Numerical results for the Boltzmann-Poisson system in Example \ref{eg:BPSys}. 
    }
    \label{fig:BPSys}
\end{figure}
\begin{figure}[!htbp]
\centering
\subfigure[Electric field $E$]{\includegraphics[width=0.42\textwidth]{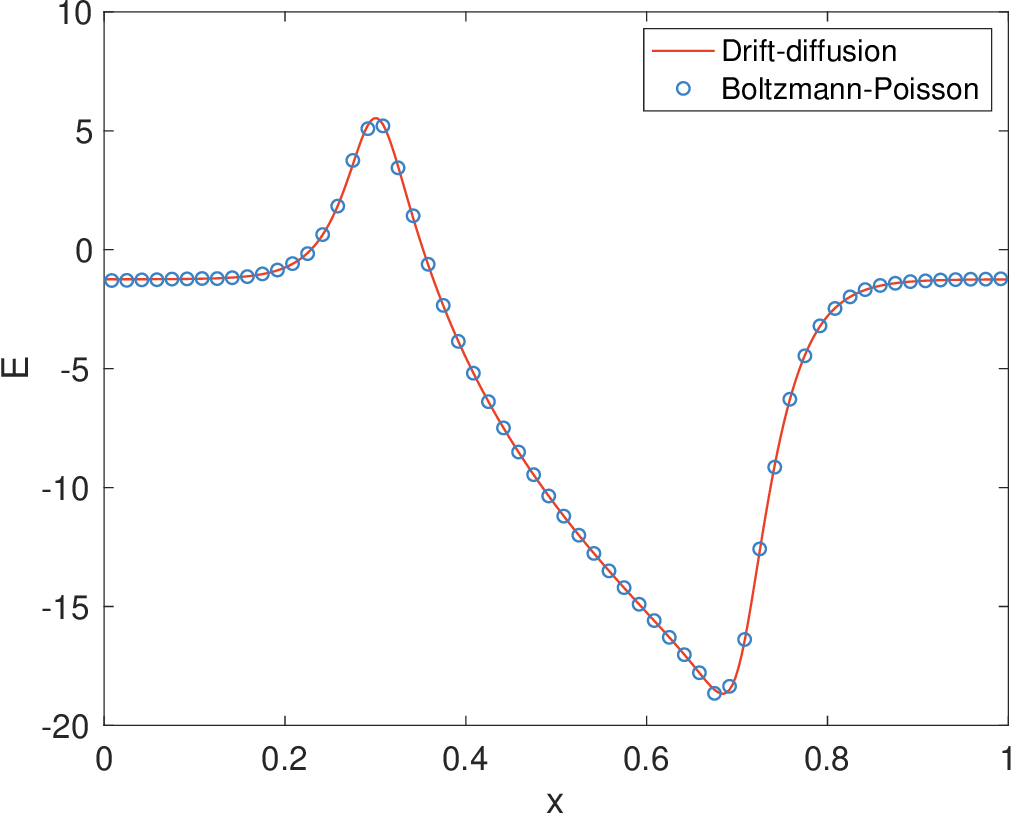}}\qquad\qquad
    \subfigure[Electric potential $\Phi$]{\includegraphics[width=0.42\textwidth]{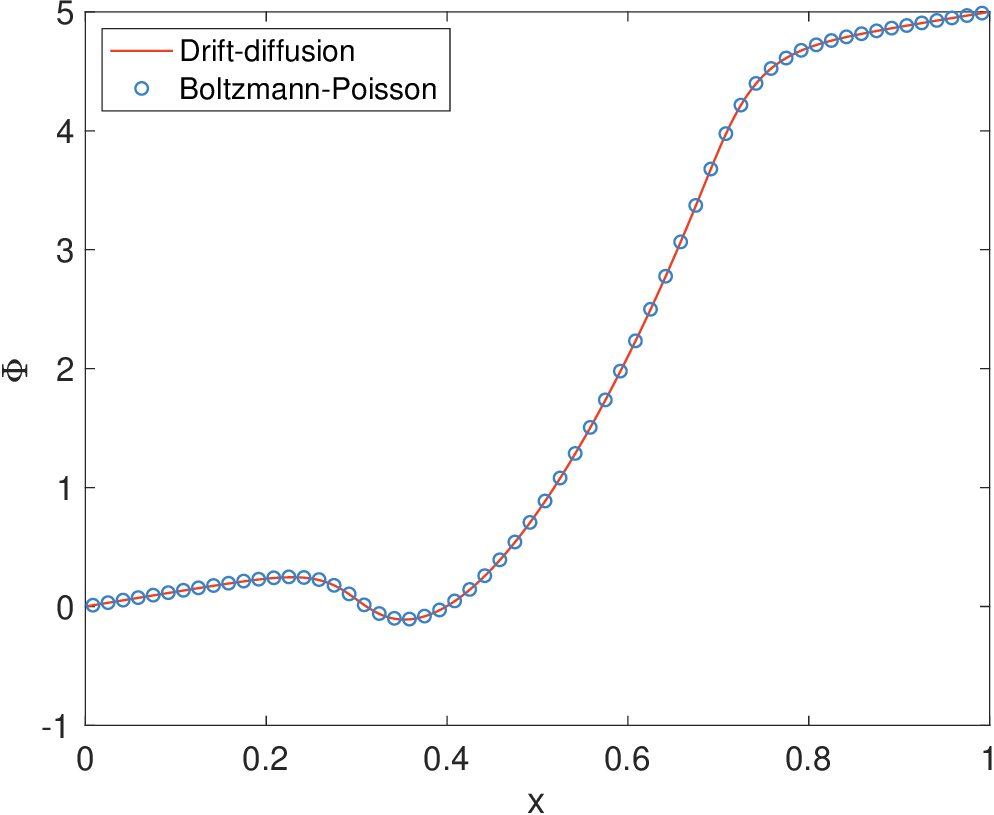}}
    \caption{Electric field $E$ (left) and electric potential $\Phi$ (right) for Boltzmann-Poisson system and the limiting drift-diffusion system in Example \ref{eg:BPSys}.}
    \label{fig:BPSysEpotential}
\end{figure}

\begin{exmp}[Mixed regime]\label{eg:mixing}
    In this example, we consider a complicated case where the Knudsen number $\varepsilon$ carries a mixed scaling and is spatially dependent as 
    \begin{equation*}
        \varepsilon(x)=10^{-3}+\frac{1}{2}[\tanh(1-11x)+\tanh(1+11x)].
    \end{equation*}
    This case contains both kinetic and diffusive regimes. The electric field is $E=0$. We assume the period boundary condition, with the initial condition given by 
    \begin{equation*}
        f(x,v,t=0)=\frac{\rho_0}{2}\left[
        \exp\left(-\frac{|v-u_0|^2}{T_0}\right)+\exp\left(-\frac{|v+u_0|^2}{T_0}\right)
        \right],
    \end{equation*}
    where $u_0=0.2$, $T_0=(5-2\cos(2\pi x))/20$, and $\rho_0=(2-\sin(2\pi x))/2$.
\end{exmp}

We set $\Delta x=0.02$ and $\Delta t=2\times 10^{-6}$ and simulate this example up to $t=0.1$. The reference solution is obtained by using the finite volume method with a fine mesh of $N_x=250$. We plot the density $\rho$ in Figure \ref{fig:mixing}(a). It can be observed that the numerical solution of the APDG scheme matches well with the reference solution, especially at the transition positions (in $x$), where $\varepsilon$ varies dramatically. This result demonstrates the excellent AP property of the  proposed method. 
 We also show in Figure \ref{fig:mixing}(b) the time evolution of the energy norm
\begin{equation}
    \mathcal{E}(t_n) = \sanshu r^{n} \sanshu^2 + \| \varepsilon \|_{L^2} \sanshu j^{n} \sanshu,
\end{equation}
which is decreasing with respect to time level $n$. This observation  is consistent with the stability analysis in Theorem \ref{Thm:main}.

\begin{figure}[!htbp]
    \centering
   \subfigure[Numerical solutions $\rho$] {\includegraphics[width=0.4\textwidth]{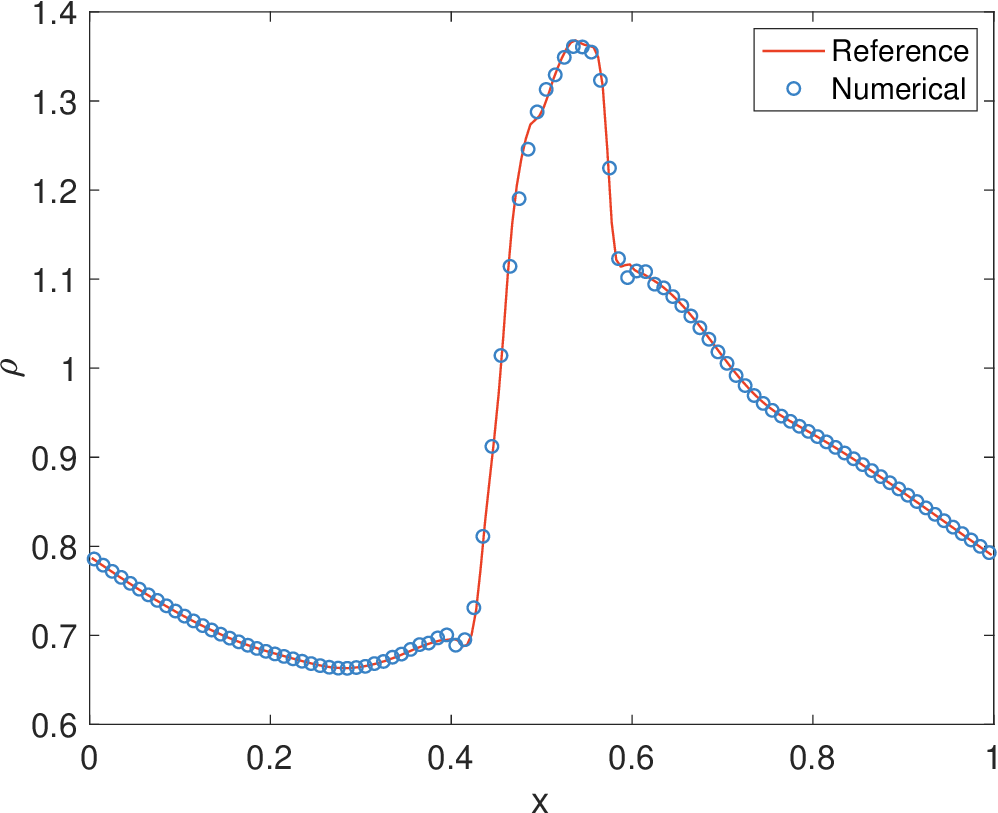}}\qquad\qquad
    \subfigure[Time evolution of energy norm] {\includegraphics[width=0.4\textwidth]{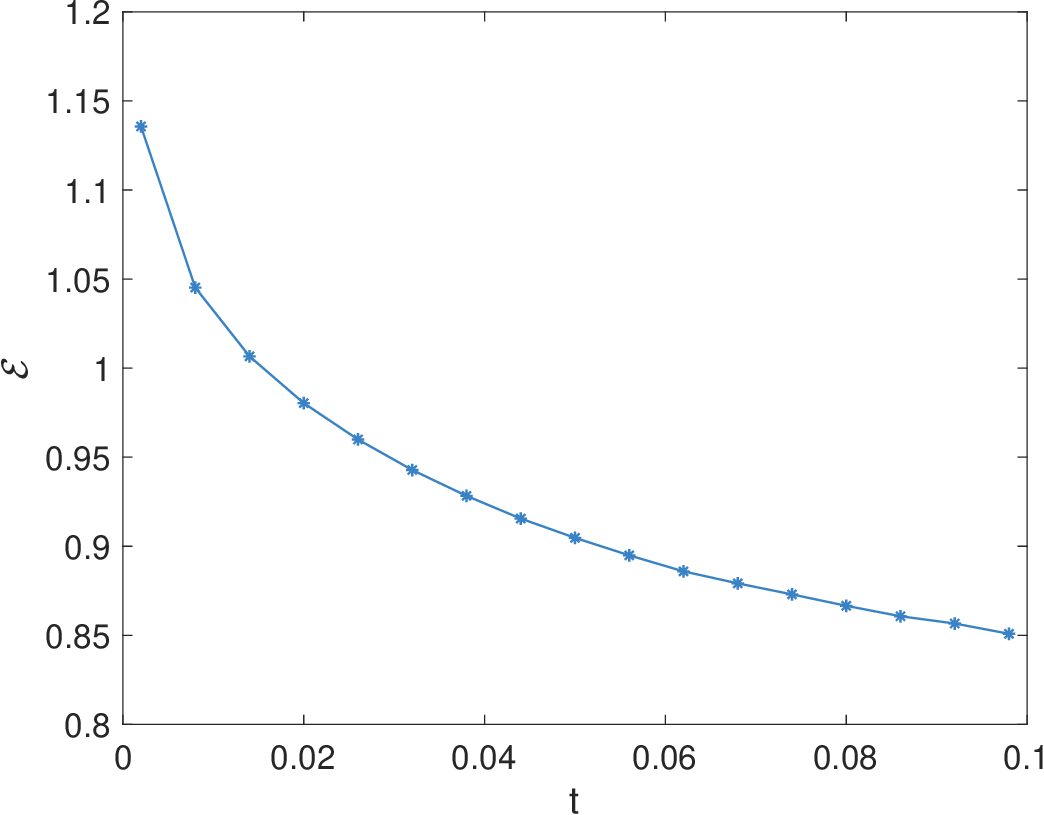}}
    \caption{ Numerical results for problems containing mixed regimes in Example \ref{eg:mixing}.}
    \label{fig:mixing}
\end{figure}


\section{Conclusion}
\label{sec:conclusion}
In this paper, we study the linear semiconductor Boltzmann equation under the diffusive scaling. An efficient and accurate numerical scheme that owns the property of asymptotic-preserving and positivity-preserving is proposed, with DG discretization in space and even-odd decomposition in time. We show the stability analysis in the even-odd decomposition framework and provide the CFL condition. Extensive numerical experiments have shown the accurate and robust performance of our proposed scheme. In the future work, we will develop higher-order IMEX Runge-Kutta methods and study other nonlinear kinetic equations.

\appendix
\section{Velocity discretization}
\label{sec:v_discrete}
In this section, we provide the velocity discretization using the Hermite polynomials, which has been widely used, see e.g. \cite{Klar, jin_discretization_2000}. 

Let $f(t,x,v) = \psi(t,x,v) M(v)$,  with $M(v)$ given in \eqref{eq:Maxwellian}
and
\begin{equation}\label{Psi} \psi(t,x,v) = \sum_{\ell=0}^{N_v} \psi_\ell(t,x) \tilde H_\ell(v), \end{equation}
where $\tilde H_\ell$ are the renormalized Hermite polynomials
defined as  $\tilde H_{-1}=0$, $\tilde H_0 = 1/(2\pi)^{1/4}$ and
$$ \tilde H_{\ell+1} = v \sqrt{\frac{1}{\ell+1}}\tilde H_\ell - \sqrt{\frac{\ell}{\ell+1}}\tilde H_{\ell-1} \quad \text{for } \ell \geq 0, $$
satisfying $\partial_v \tilde H_\ell = \sqrt{\ell}\, \tilde H_{\ell-1}$. 
The inverse Hermite expansion is given by 
\begin{equation}\label{I-Psi} 
\psi_i = \sum_{\ell=0}^{N_v}\psi(v_\ell)\, \tilde H_i(v_\ell)\, w_\ell, \quad i=0,\dots, N_v,
\end{equation}
where $\big\{v_\ell\big\}_{\ell=0}^{N_v}$ are the Gauss-Hermite quadrature points, and  $\big\{w_\ell\big\}_{\ell=0}^{N_v}$ are the associated quadrature weights. Here and after, we omit the arguments $x,t$ and adopt simpler notations such as $\psi_\ell$, $\psi(v)$, $f(v)$, etc.. 

The collision operator $Q$ can be computed by
$$ Q(f)(v) = M(v) \sum_{\ell=0}^{N_v} \sigma(v, v_\ell)\, \psi(v_\ell)\, w_\ell - \lambda(v) f(v), $$
with $\lambda(v) = \sum_{\ell=0}^{N_v} \sigma(v, v_\ell)\, w_\ell$. Therefore, we can compute the values of $Q(f)(v)$ on the Gauss-Hermite quadrature points as
\begin{equation}
    Q(f)(v_i) = M(v_i) \sum_{\ell=0}^{N_v} \sigma(v_i, v_\ell)\, \psi(v_\ell)\, w_\ell - \lambda(v_i) f(v_i).
\end{equation}

It follows from \eqref{Psi} and \eqref{I-Psi} that the derivative of $\psi$ with respect to $v$ by
\begin{equation*}
\begin{aligned}
\partial_v \psi & = \sum_{\ell=0}^{N_v} \psi_\ell\, \partial_v \tilde H_\ell(v) 
= \sum_{\ell=0}^{N_v} \psi_\ell \sqrt{\ell}\, \tilde H_{\ell-1}(v) \\
& = \sum_{\ell=0}^{N_v} \sum_{i=0}^{N_v} \psi(v_i) \, \tilde H_\ell(v_i) w_i \sqrt{\ell}\, \tilde H_{\ell-1}(v).
\end{aligned}
\end{equation*}
Thus, the derivative of $\psi$ with respect to $v$ at the Gauss-Hermite quadrature points can be further expressed as
\begin{equation}
    \partial_v \psi(v_m)= \sum_{\ell=0}^{N_v} \sum_{i=0}^{N_v} \psi(v_i) \, \tilde H_\ell(v_i) w_i \sqrt{\ell}\, \tilde H_{\ell-1}(v_m)
    =\sum_{i=0}^{N_v} \psi(v_i)\, C_{im},
\end{equation}
where $C_{im} = \sum_{\ell=0}^{N_v} \sqrt{\ell}\, \tilde H_\ell(v_i) \tilde H_{\ell-1}(v_m) w_i$
, which can be precomputed before the time iteration.

\bibliographystyle{siamplain}
\bibliography{APDG2}

\end{document}